\documentclass[%
 aip,
 amsmath,amssymb,
 reprint,%
]{revtex4-1}

\usepackage{graphicx}
\usepackage{dcolumn}
\usepackage{bm}
\usepackage{placeins}

\usepackage{listings}
\lstdefinestyle{pseudocode}{
    backgroundcolor=\color{gray!10},
    basicstyle=\ttfamily\footnotesize,
    frame=single,
    rulecolor=\color{gray!35},
    columns=fullflexible,
    breaklines=true,
    xleftmargin=0.5em,
    xrightmargin=0.5em
}

\usepackage[utf8]{inputenc}
\usepackage[T1]{fontenc}
\usepackage{mathptmx}
\usepackage{etoolbox}

\usepackage{amsmath,amssymb,amsthm}

\newtheorem{thm}{Theorem}
\newtheorem{lemma}{Lemma}
\newtheorem{corollary}{Corollary}
\newtheorem{definition}{Definition}

\newtheorem{remark}{Remark}
\newtheorem{prediction}{Prediction}

\newcommand{\R}{\mathbb{R}}

\newcommand{\btheta}{\boldsymbol{\theta}}
\newcommand{\bv}{\mathbf{v}}
\newcommand{\bOmega}{\boldsymbol{\Omega}}
\newcommand{\bone}{\mathbf{1}}
\newcommand{\bzero}{\mathbf{0}}
\newcommand{\be}{\mathbf{e}}
\newcommand{\bw}{\mathbf{w}}

\newcommand{\ran}{\operatorname{ran}}

\usepackage{caption,graphicx,subfig,xcolor}

\makeatletter
\def\@email#1#2{%
 \endgroup
 \patchcmd{\titleblock@produce}
  {\frontmatter@RRAPformat}
  {\frontmatter@RRAPformat{\produce@RRAP{*#1\href{mailto:#2}{#2}}}\frontmatter@RRAPformat}
  {}{}
}%
\makeatother
\begin{document}

\preprint{AIP/123-QED}

\title{Predicting the occurrence of  Braess paradox in the synchronization threshold of coupled oscillator systems}
\author{Justin Arches}
\author{Emmanuel Fleurantin}
\author{Matt Holzer*} 
\email{mholzer@gmu.edu}
\author{Siddhant Sood}
\author{Shakeeb Uddin}
 \altaffiliation{Department of Mathematical Sciences, George Mason University, Fairfax, VA 22030}

\date{\today}

\begin{abstract}
We study Braess-type paradoxes in coupled oscillator networks where the addition of an edge to the network leads to an increase in the critical coupling required for the existence of a stable phase-locked synchronized solution.  By introducing a continuous edge-weight parameter and constructing an augmented system that allows application of the Implicit Function Theorem, we derive an explicit first-order formula for the sensitivity of the critical coupling, $K'_c(0)$, to small changes in the network structure.  This formula reveals that an edge is locally Braess if the ordering of the oscillators being coupled is opposite the ordering of their components in the critical eigenvector at the saddle-node bifurcation, providing a simple criterion for Braess paradox to occur.  We then employ this local approximation as a predictor for the occurrence of Braess paradox when the edge is fully incorporated in the network.  This prediction is tested numerically in several classes of random networks, demonstrating both high predictive fidelity and the nonlinear limitations of the local approximation.  We also examine several explicit examples, highlighting possible motifs by which Braess-type paradoxes occur. Together, these results establish a predictive framework for understanding how incremental changes in network topology can produce counterintuitive synchronization outcomes.

\end{abstract}

\maketitle

\begin{quotation}
In oscillator networks described by the Kuramoto model, adding a single new edge can \emph{hinder} synchronization rather than promote it, with consequences ranging from power grid stability to biological rhythms. This is often called a Braess-type paradox in reference to the well-known fact that adding a new road to a transportation network can sometimes decrease the efficiency of traffic flow.  We study this phenomenon in the Kuramoto model and derive an analytical predictor, rooted in the Implicit Function Theorem, that classifies any candidate edge as beneficial or Braess directly from the geometry of the current synchronous state. The result is a fast, mathematically grounded tool for understanding how local structural changes drive global, and sometimes counterintuitive, network behavior.

\end{quotation}

\section{Introduction} 

Braess paradox refers to the counterintuitive phenomenon in which adding an edge to a network can degrade its overall performance. Originally 
identified in transportation networks \cite{braess1968paradoxon}, where 
the addition of a road can paradoxically increase the total travel time of all 
drivers, the effect was later shown to be widespread in a broad class 
of networked systems \cite{steinberg1983prevalence}. The phenomenon arises 
not from poor design, but from rational individual behavior that, in 
aggregate, leads to suboptimal global outcomes, a hallmark of the 
tension between local and global optimization in complex networks.

This counterintuitive behavior extends beyond transportation to 
oscillatory networks, where the relevant measure of performance is 
synchronization rather than travel time. Power grids are a 
particularly consequential setting for this phenomenon: generators and 
consumers are represented as nodes with oscillatory dynamics, and the 
stable operation of the grid depends on maintaining phase-locked 
synchronization across the network \cite{witthaut2012braess}. The loss 
of synchronization in such systems can trigger cascading failures, making 
the question of how network topology shapes synchronizability not merely 
of theoretical interest, but of direct practical importance.

We study the Kuramoto model\cite{kuramoto1984chemical}, a canonical framework for 
studying synchronization in coupled oscillator systems.  In this model, 
each node $i = 1, \dots, n$ represents an oscillator whose phase 
$\theta_i(t)$ evolves according to
\begin{equation}\label{eq:kuramoto}
    \frac{d\theta_i}{dt} =\omega_i+K\sum_{j=1}^n A_{ij}\sin(\theta_j-\theta_i),
\end{equation}
where $\omega_i$ is the natural frequency of the $i$th oscillator and $K$ is the coupling constant.  Interactions between oscillators are described by the adjacency matrix $A$ associated with a connected, 
undirected graph $G=(V,E)$.  Each node in the graph is identified with an integer $\{1,2,\dots,n\}$ and edges are represented uniquely by ordered pairs $(p,q)$ with $p<q$.  The adjacency matrix $A$ therefore satisfies $A_{ij} = A_{ji}$ with $A_{ij}=1$ if and only if $(i,j)\in E$.   We adopt the convention to label nodes in the graph in the order of increasing natural frequency.  That is, we order nodes in the graph so that $i>j$ implies $\omega_i\geq \omega_j$.  By potentially transforming to a rotating frame, we may assume $\sum \omega_i=0$.

The evolution of oscillators in the Kuramoto model (\ref{eq:kuramoto}) is subject to two competing influences. First, each oscillator has its own individual natural frequency, which tends to separate oscillators. At the same time, the nonlinear coupling promotes oscillators to match each other's phases. The competition between these two effects plays out as follows.  When $K\ll 1$, the oscillators move freely, and each oscillator rotates at nearly constant angular rate equal to its natural frequency. Conversely, when $K\gg 1$, the nonlinear terms dominate, and there exists a stable steady state solution where all phases are approximately equal. These two regimes are separated by a critical coupling constant $K_c$, at which this steady state solution disappears through a saddle-node bifurcation. The study of this critical coupling constant and its dependence on the set of natural frequencies and the network of interactions has been an immense field of research over the previous decades \cite{acebron2005kuramoto,dorfler2011critical,rodrigues2016kuramoto}.

We are concerned with the following problem.  Suppose that an edge is added to a network.  This leads to additional synchrony-enhancing terms appearing on the right-hand side of (\ref{eq:kuramoto}).  It is natural to expect that the addition of these terms should serve to decrease the critical coupling required for synchronization.  Braess paradox refers to situations where the opposite occurs, meaning the critical coupling increases instead.  The purpose of this article is to study what properties of the steady state solution and the network of interactions would facilitate such a behavior.

We now make some of these ideas precise.  The central object of study is the \emph{phase-locked} (or steady) state, 
in which all oscillators rotate at a common frequency, and their phase 
differences remain constant in time. Such states are solutions to the 
algebraic system
\[
\omega_i + K\sum_{j=1}^n A_{ij}\sin(\theta_j - \theta_i) = 0, 
\qquad i = 1, \dots, n,
\]
which we write compactly as $F(\boldsymbol{\theta}, K) = 0$. Phase-locked 
states do not exist for arbitrarily small coupling; synchronization 
emerges only when $K$ is sufficiently large to overcome the heterogeneity 
in natural frequencies.  When multiple stable phase-locked states exist, we are interested in the 
one exhibiting the highest degree of synchrony, as measured by the order 
parameter
\begin{equation} \label{eq:orderpar}
r(\mathbf{\theta})=\frac{1}{n} \left| \sum_{i=1}^n e^{\mathbf{i}\theta_i}\right|. 
\end{equation}

The order parameter $r \in [0,1]$ quantifies global phase coherence: 
$r = 1$ corresponds to perfect synchronization and $r = 0$ to complete 
incoherence. As we said previously, for $K\gg 1$, one can show that there exists a stable synchronous state with $\theta_i\approx 0$ for all oscillators.  This stable state can be continued in the parameter $K$ until a bifurcation occurs.  This bifurcation is generically a saddle-node, and we define the \emph{critical coupling constant} $K_c$ as the value of $K$ at which this branch of synchronous solutions undergoes a saddle-node bifurcation.  We note that other stable synchronous solutions may co-exist with the branch considered here; see the twisted state solutions of Wiley, Strogatz and Girvan\cite{wiley2006size} for example. However, these alternate steady states typically have a lower degree of synchrony as measured by the order parameter $r$. 

\begin{figure}[h]
    \centering
    \includegraphics[width=0.49\textwidth]{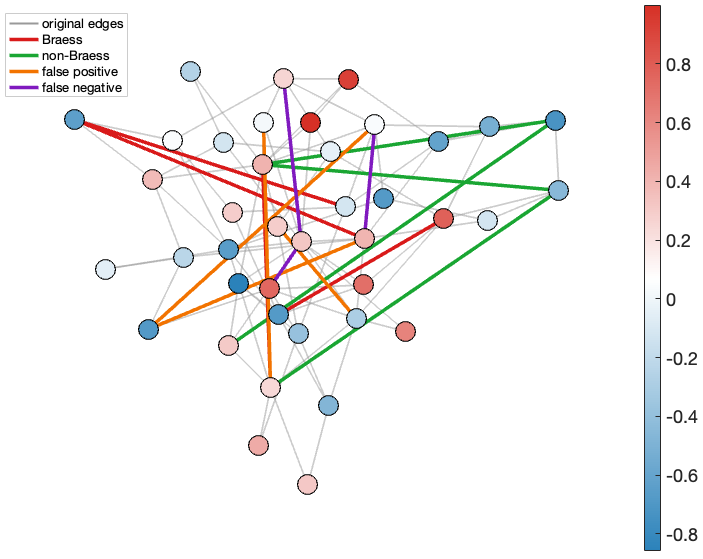}
    \caption{A representative Erd\H{o}s--R\'enyi random network of 
    $n = 40$ oscillators, with nodes colored by natural frequency 
    $\omega_i$ (blue: negative, red: positive). Highlighted edges 
    illustrate the range of behaviors that can arise when a new edge 
    is added to the network: some additions raise the critical coupling $K_c$ and hinder synchronization (Braess edges, red), while others lower $K_c$ and facilitate it (non-Braess edges, green). Misclassified edges (false positives, orange; false negatives, purple) and the original network edges (gray) are also shown.}
    \label{fig:network_archetypes}
\end{figure}

Braess paradox in oscillator networks is generally referred to as a decrease in the level of synchronization in a system after the incorporation of additional coupling terms.  Witthaut and Timme \cite{witthaut2012braess} first demonstrated this phenomenon in power grid models, showing that adding edges can destabilize phase-locked states and reduce synchronization, and establishing the presence of Braess paradox through systematic examination of how individual edge additions alter the critical coupling required for stable operation. Subsequent research in this area has focused on various measures of synchronization. Specifically, upon adding or strengthening an edge in the network, one could view Braess-type phenomena as being associated with 
i) lower degree of synchrony as measured by $r(\theta)$, 
ii) larger loads occurring over individual edges in the network, or 
iii) a decrease in the stability of a phase-locked solution as measured by the value of the least stable eigenvalue appearing in the linearization near the steady state. 
Arola-Fernandez, Skardal, and Arenas\cite{arola2021geometric} studied how edge additions could lead to a lower degree of synchrony as measured by $r(\theta)$.  They expressed the synchronized state through the Laplacian pseudo-inverse, enabling a geometric 
analysis of how network structure shapes phase configurations.  Manik, Witthaut, and 
Timme \cite{manik2017cycle,manik2022predicting} analyzed how infinitesimal perturbations in edge strength alter 
global flow patterns in supply and transport networks, providing a 
framework for identifying edges whose strengthening can induce 
Braess-type behavior through an overloading of certain edges.  Coletta and Jacquod\cite{coletta2016linear} studied how edge addition can lead to a decrease in the stability of a phase-locked solution, as measured by the value of the least stable eigenvalue appearing in the linearization near the steady state, with a primary focus on chain networks. While not phrased explicitly in terms of Braess paradoxes, work by Motter, Zhou and Kurths\cite{motter2005enhancing} showed how heterogeneity in network connections could decrease synchronization.  Taylor, Skardal and Sun\cite{taylor2016synchronization} study optimization of synchronization through the derivation of a synchrony alignment function meant to quantify the interplay between network structure and individual frequencies.  Together, these works make clear that the relationship between
the network's connectivity and synchronizability is subtle and non-monotone.

\begin{figure}[t]
    \centering
    \includegraphics[width=0.95\columnwidth]{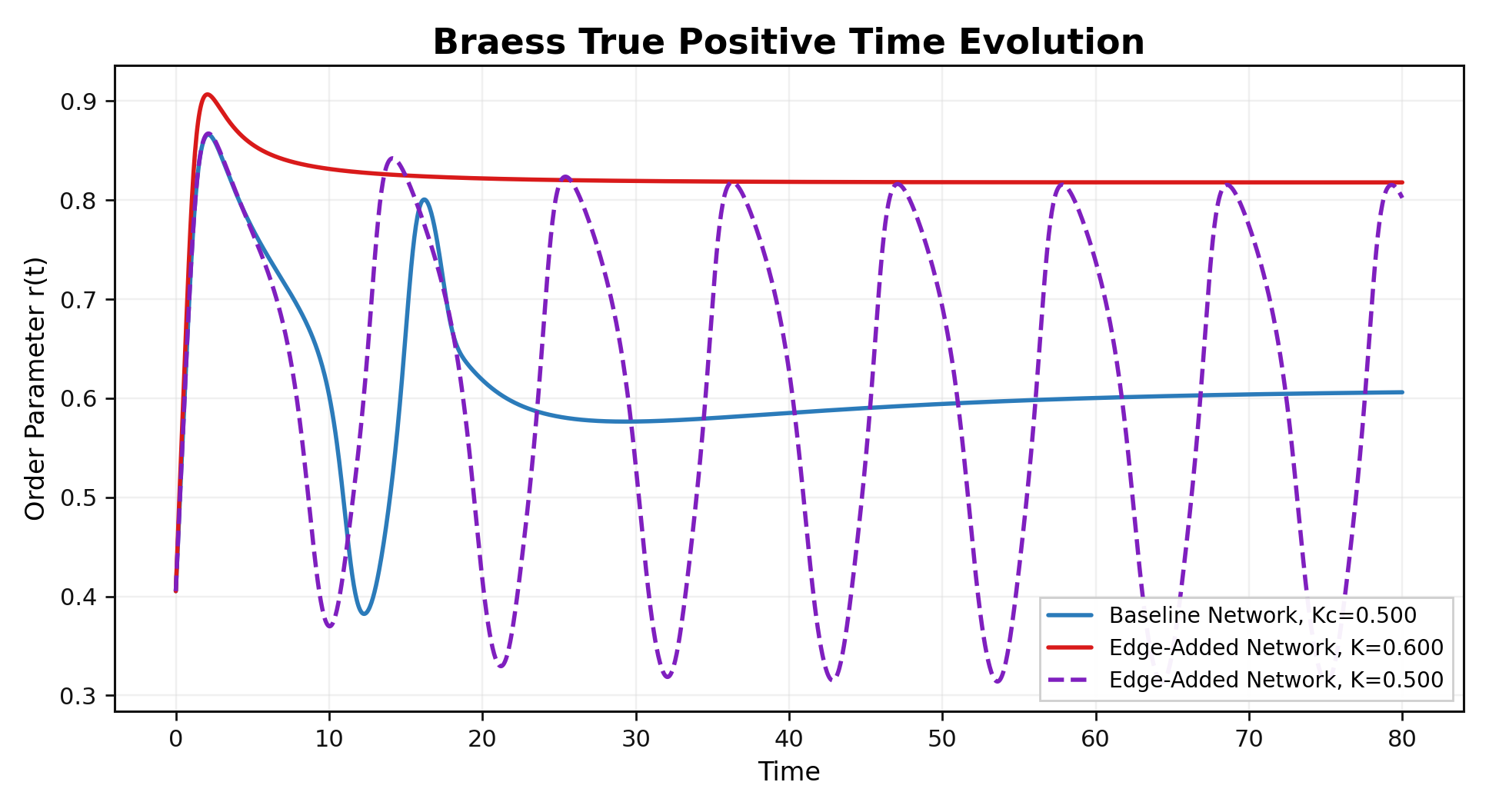}
    \caption{Time evolution of the order parameter demonstrating possible deleterious effect of edge addition on system behavior.  In the baseline network (blue curve), the solution converges to a phase-locked steady state with a constant order parameter indicating a high level of synchrony. We demonstrate the effect of the addition of a Braess edge, where the steady state disappears, and the order parameter oscillates (purple curve) for the same choice of the coupling constant before edge addition.  Increasing the coupling constant is required to recover synchronous behavior (red curve).    }
    \label{fig:true-positive-time-evolution}
\end{figure}

In this work, we adopt a slightly different approach and characterize Braess paradox in terms of how the critical 
coupling $K_c$ changes under a small structural perturbation of the 
network, specifically, the addition of a single new edge; see Figure~\ref{fig:true-positive-time-evolution} for an example potential impact.  Our main result (Theorem~\ref{thm:saddlenode} in Section~\ref{sec:theorem}) is a derivation of an analytical formula, based on the Implicit Function Theorem, 
that predicts the sign of this change. Our framework applies to general 
oscillator networks and allows one to predict whether a candidate edge is 
\emph{Braess} (edge addition raises $K_c$, hindering synchronization) or 
\emph{non-Braess} (edge addition lowers $K_c$, facilitating 
synchronization) without requiring full recomputation of the phase-locked 
state for every edge under consideration. The primary insight gained from our analysis is that Braess edges arise primarily when they couple oscillators whose ordering of phases is opposite the ordering of the respective entries in the critical eigenvector at the saddle-node bifurcation.  We analyze the accuracy of this 
classifier, including cases of false positives and false negatives, and 
identify network archetypes in which behavior type arises.  A representative example of the phenomena studied in this work is shown in Figure~\ref{fig:network_archetypes}, where candidate edges on an Erd\"os R\'enyi random network are classified according to their effect on the critical coupling $K_c$; our predictor correctly identifies most edges, though occasional misclassifications (false positives in orange, false negatives in purple) reveal the boundaries of the linear approximation. A secondary result (Theorem~\ref{thm:stablepert} in Section~\ref{sec:theorem}) computes how the order parameter varies with small changes in the network.  This result requires a full understanding of the spectrum of the steady state solution. However, we apply it in some degenerate cases and show that it recovers the condition derived in  Arola-Fernandez, Skardal, and Arenas\cite{arola2021geometric} in the limit as $K\to\infty$.

The remainder of this paper is organized as follows. 
In Section~\ref{sec:theorem}, we introduce the mathematical framework 
and state the main results. Specifically, we define the augmented system 
$F(\boldsymbol{\theta}, K, a)$ that encodes the addition of a new edge 
$(p,q)$ with weight $a$, and formally state Theorem~\ref{thm:saddlenode}, 
which provides an explicit formula for $K_c'(0)$ via the Implicit 
Function Theorem, yielding a computable criterion for predicting whether 
a candidate edge is Braess. We complement this with 
Theorem~\ref{thm:stablepert}, which characterizes how the order 
parameter $r$ responds to the edge addition near a stable synchronous 
state, and we discuss the geometric interpretation of both results. 
In Section~\ref{sec:numerics}, we test the validity of the analytical prediction 
of Theorem~\ref{thm:saddlenode} against numerical simulations on planar 
random geometric graphs of varying scale, examining the classifier 
performance, including rates of true and false positives and negatives, and identifying the role of coupling oscillators whose phase differences exceed $\pi$ as a source 
of prediction error. In Section~\ref{sec:motifs}, we analyze several 
network motifs in which Braess behavior can be understood explicitly, providing 
structural intuition for the conditions under which the paradox arises. 
Section~\ref{sec:discussion} concludes with a discussion of the results 
and directions for future work. Proofs of 
Theorems~\ref{thm:saddlenode} and~\ref{thm:stablepert} are collected 
in Appendices~\ref{appendixA} and~\ref{appendixB}, respectively.

\section{Main Theorem: Braess edge prediction via Implicit Function Theorem}\label{sec:theorem}

Our goal is to calculate how our critical coupling $K_c$ changes upon adding a new
edge of weight $a$ between two nodes $p$ and $q$. So, we define
a parameter $a\in[0,1]$ where $a=0$ represents the lack of an edge between node
$p$ and $q$, while $a=1$ represents the full addition of the edge. Considering $K_c$ as a function of $a$, we want
to differentiate and compute $K_c'(0)$.  We first
redefine our steady state equation to incorporate the new addition of an edge.

\begin{definition}\label{def:steadystate}
  Let $F:\R^n\times\R\times\R\to \R^n$ such that
  \begin{equation} F(\boldsymbol{\theta}, K, a) = \boldsymbol{\Omega} +
    K(s(\boldsymbol{\theta})+ag_{pq}(\boldsymbol{\theta}))\label{eq:F} \end{equation}
  where $\boldsymbol{\Omega}=(\omega_1,\dots,\omega_n)^T$,
  \[s(\theta)_i = \sum_{j=1}^n A_{i,j}\sin(\theta_j - \theta_i),\]
  and
  \[g_{pq}(\theta) = \sin(\theta_q-\theta_p)(\mathbf{e}_p-\mathbf{e}_q).\]
  We specify for uniqueness that $p<q$, and 
  $\mathbf{e}_k\in\R^n$ represents the $k$-th standard basis vector.
  We define the \textbf{steady state} equation when
  $F(\boldsymbol{\theta}, K, a) = 0$.
\end{definition}

Suppose that when $a=0$, there exists a steady state solution $\boldsymbol{\theta}_c$ that undergoes a saddle-node bifurcation
at $K_c$. At this bifurcation, the Jacobian, 
$D_{\boldsymbol{\theta}}F(\boldsymbol{\theta}_c, K_c, 0)$ has a double zero eigenvalue.  One eigenvector is the constant vector $\mathbf{1}$, which always lies in the kernel of the Jacobian since it has a row sum of zero.  We denote the second eigenvector $\mathbf{v}_c$, which is orthogonal to $\mathbf{1}$. We can then derive an explicit first-order
formula for the critical coupling strength, $K_c'(0)$, as the edge weight is increased incrementally from $a=0$.  Our main result is the following.

\begin{thm} \label{thm:saddlenode}
Suppose that when $a=0$, there exists $(\boldsymbol{\theta}_c,K_c,0)$ such that
$F(\boldsymbol{\theta}_c,K_c,0)=0$ and its Jacobian, 
$D_{\boldsymbol{\theta}}F(\boldsymbol{\theta}_c,K_c,0)$, has a null vector $\mathbf{v}_c$ satisfying $\mathbf{v}_c^T\mathbf{v}_c=1$, $\mathbf{v}_c^T\bone=0$, and
\[\ker\!\left(D_{\boldsymbol{\theta}}F(\boldsymbol{\theta}_c,K_c,0)\right)
=\operatorname{span}\{\bone, \mathbf{v}_c\}.\]

Then, for $a$ near $0$, there exists smooth functions $\boldsymbol{\theta}(a)$, $\mathbf{v}(a)$, $K_c(a)$ such that 
\[\boldsymbol{\theta}(0)=\boldsymbol{\theta}_c, \qquad \mathbf{v}(0)=\mathbf{v}_c, 
\qquad K_c(0)=K_c.\]
and $F(\boldsymbol{\theta}(a), K_c(a), a)=0$ with 
$D_{\boldsymbol{\theta}}F(\boldsymbol{\theta}(a),K_c(a),a)\mathbf{v}(a)=\mathbf{0}$
for all $a$ near $0$. Moreover, if $\mathbf{v}(0)^T\Omega\neq 0$ then 
\begin{equation} \label{eq:Kprime}
    K'_c(0) =
\sin\left(\theta_q(0)-\theta_p(0)\right)(K_c(0))^2\frac{v_p(0)-v_q(0) }{\mathbf{v}(0)^T \Omega }.
\end{equation}
\end{thm}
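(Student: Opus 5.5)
The plan is to recast the saddle-node as a regular zero of an augmented system and apply the Implicit Function Theorem in $a$. The formula for $K_c'(0)$ will then follow by differentiating $F=0$ along the branch and pairing with the critical eigenvector, using the symmetry of the Jacobian.

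First I record two structural facts.
\begin{itemize}
\item The Jacobian is a weighted Laplacian: $D_{\btheta}F(\btheta,K,a)=-K\,L(\btheta,a)$, where $L$ has off-diagonal weights $A_{ij}\cos(\theta_j-\theta_i)$, plus the $a\cos(\theta_q-\theta_p)$ term on the $(p,q)$ entries. Hence it is symmetric and annihilates $\bone$.
\item Since $\sum_i\omega_i=0$ and every coupling term is antisymmetric, $\bone^TF\equiv 0$. So $F$ takes values in $\bone^\perp$, and the rotational symmetry $\btheta\mapsto\btheta+c\bone$ can be factored out.
\end{itemize}
Accordingly I would work on $X=\bone^\perp$ and define
\[
G(\btheta,\bv,K,a)=\bigl(F(\btheta,K,a),\;D_{\btheta}F(\btheta,K,a)\bv,\;\tfrac12(\bv^T\bv-1)\bigr)
\]
as a map $X\times X\times\R\times\R\to X\times X\times\R$. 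The unknowns $(\btheta,\bv,K)$ have dimension $2(n-1)+1$, which matches the target. At $a=0$ the point $(\btheta_c,\bv_c,K_c)$ is a zero of $G$.

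The second step, which I expect to be the main obstacle, is showing that $D_{(\btheta,\bv,K)}G$ is invertible at this point. Write $J=D_{\btheta}F(\btheta_c,K_c,0)$; restricted to $X$ it is symmetric with one-dimensional kernel spanned by $\bv_c$. Suppose $(\boldsymbol\xi,\boldsymbol\eta,\kappa)$ lies in the kernel of the augmented derivative.
\begin{itemize}
\item The first row reads $J\boldsymbol\xi+\kappa\,\partial_KF=0$. Pairing with $\bv_c$ and using $\partial_KF=s(\btheta_c)=-\Omega/K_c$ gives $\kappa\,\bv_c^T\Omega=0$. So $\kappa=0$ exactly under the hypothesis $\bv_c^T\Omega\neq0$, and then $\boldsymbol\xi=\beta\bv_c$.
\item The second row reads $J\boldsymbol\eta+\beta\,D^2_{\btheta}F(\bv_c,\bv_c)=0$. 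Pairing with $\bv_c$ gives $\beta\,\bv_c^TD^2_{\btheta}F(\bv_c,\bv_c)=0$.
\item Then $\boldsymbol\eta\in\ker J\cap X$, and the normalization row $\bv_c^T\boldsymbol\eta=0$ forces $\boldsymbol\eta=0$.
\end{itemize}
So I need the quadratic coefficient $\bv_c^T D^2_{\btheta}F(\bv_c,\bv_c)$ to be nonzero. This is the standard nondegeneracy of a saddle-node, and I would invoke it as part of ``undergoes a saddle-node bifurcation.'' In components it equals $K_c\sum_{(i,j)\in E}\sin(\theta_j-\theta_i)(v_j-v_i)^3$, up to sign. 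Once invertibility holds, the Implicit Function Theorem gives smooth $\btheta(a),\bv(a),K_c(a)$ with the stated properties.

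Finally, I differentiate $F(\btheta(a),K_c(a),a)=0$ at $a=0$:
\[
J\btheta'(0)+K_c'(0)\,s(\btheta_c)+K_c\,g_{pq}(\btheta_c)=0 .
\]
Next I left-multiply by $\bv_c^T$. Since $J$ is symmetric and $J\bv_c=0$, the first term drops out. Substituting $s(\btheta_c)=-\Omega/K_c$ and $\bv_c^Tg_{pq}=\sin(\theta_q-\theta_p)(v_p-v_q)$ gives
\[
-K_c'(0)\,\frac{\bv_c^T\Omega}{K_c}+K_c\sin(\theta_q-\theta_p)(v_p-v_q)=0 .
\]
Solving for $K_c'(0)$ using $\bv_c^T\Omega\ne0$ yields \eqref{eq:Kprime}.
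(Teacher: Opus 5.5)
Your proof is correct and follows the same route as the paper. Both set up an augmented system (steady state, null-vector equation, normalization, orthogonality to $\mathbf{1}$, and removal of the phase symmetry), apply the Implicit Function Theorem, and then obtain the formula by differentiating $F(\boldsymbol{\theta}(a),K_c(a),a)=0$ and pairing with $\mathbf{v}_c$, using symmetry of the Jacobian.

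The differences are bookkeeping or in your favor. You factor out $\mathbf{1}$ by working on $\mathbf{1}^\perp$, where the paper adds $\theta_1=0$ and projects out the two cokernel directions $\psi_1,\psi_2$. You also prove invertibility of the augmented Jacobian from $\mathbf{v}_c^T\Omega\neq 0$ and $\mathbf{v}_c^T D^2_{\boldsymbol{\theta}}F(\mathbf{v}_c,\mathbf{v}_c)\neq 0$. This is equivalent to the paper's genericity condition, which it simply assumes and checks numerically. Your argument also makes clear that $\mathbf{v}_c^T\Omega\neq 0$ is already needed for the existence of the branch, not only for the formula.
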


\begin{proof}
The first step of the proof is to show persistence of the saddle-node bifurcation to synchrony, assumed to hold when $K=K_c$ and $a=0$.  Towards that end, we consider the following augmented system:
\begin{equation} \label{eq:G} G(\boldsymbol{\theta}, \mathbf{v}, K, a) =
\begin{pmatrix}
F(\boldsymbol{\theta}, K, a) \\
D_{\boldsymbol{\theta}}F(\boldsymbol{\theta}, K, a)\,\mathbf{v} \\
\mathbf{v}^{\top} \mathbf{v} - 1 \\
\mathbf{v}^{\top} \mathbf{1} \\
\theta_1
\end{pmatrix}.
\end{equation}
We will show that there exists a solution to $G(\boldsymbol{\theta}, \mathbf{v}, K, a)=\mathbf{0}$ for all $a$ sufficiently small.  This solution will be the continued saddle-node bifurcation to synchrony.  The first $n$ equations in (\ref{eq:G}) enforce a steady state solution to (\ref{eq:kuramoto}).  The following $n$ equations enforce the existence of a kernel element to the linearization.  The last three conditions normalize this kernel element to be a unit vector, enforce that this vector is orthogonal to the kernel element $\mathbf{1}$, while the final phase condition removes the phase invariance in (\ref{eq:kuramoto}).  

The existence  of a curve $(\mathbf{\theta},\mathbf{v},K)(a)$ with $\boldsymbol{\theta}(0)=\boldsymbol{\theta}_c$, $\mathbf{v}(0)=\mathbf{v}_c$, 
$K_c(0)=K_c$, for which $G(\mathbf{\theta}(a),\mathbf{v}(a),K_c(a),a)=0$ follows from a Liapunov-Schmidt reduction; see Appendix~\ref{appendixA} for details. 

To obtain (\ref{eq:Kprime}), we expand and differentiate the following scalar equation with respect to $a$ and evaluate at $a=0$:
\begin{equation}
    \mathbf{v}(0)^T\cdot F(\mathbf{\theta}(a),K_c(a),a)=0,
\end{equation}
see Appendix~\ref{appendixA} for details. 
\end{proof}

Theorem~\ref{thm:saddlenode} establishes a criterion dictating whether the critical constant $K_c$ increases or decreases when the weight of edge $(p,q)$ is increased by some small amount.  Since one expects additional connectivity to reduce the synchronization threshold, an increase in $K_c$ is seen as counterintuitive and categorizes such edges as Braess edges in the following manner.  

\begin{definition} 
    The edge $(p,q)$ is a Braess edge if $K_c(1)>K_c(0)$. The edge $(p,q)$ is locally Braess if $K_c'(0)>0$.    
\end{definition}

In the context of (\ref{eq:kuramoto}), the edge $(p,q)$ is a Braess edge if the critical coupling is larger when the edge $(p,q)$ is included in the adjacency matrix $A$.  Theorem~\ref{thm:saddlenode} provides a local description of whether $K_c$ is increasing or decreasing for small edge weight $a$.  This local behavior can be used as a prediction for the critical coupling when $a=1$ and the edge is fully incorporated in the network.  We formulate this as the following prediction.

\begin{prediction}
    The edge $(p,q)$ is a Braess edge if $K_c'(0)>0$.  
\end{prediction}

We use the phrasing {\em prediction} as opposed to conjecture because we do not expect this to hold universally for all edges.  Numerical investigations of the accuracy of this prediction in some sample networks are presented in Section~\ref{sec:numerics}.   Here, one can consider the critical coupling constant as a function of the added node weight $a$.  A locally Braess edge will necessarily satisfy that the graph of $K_c(a)$ will be locally increasing near $a=0$.   Theorem~\ref{thm:saddlenode} further denotes the only two generic ways in which this $K_c(a)$ graph can lose non-monotonicity: 
either the phases of the coupled oscillators will match for some value of $0<a<1$ 
or the respective values in the critical eigenvector will.  
Interestingly, our numerics reveal that it is typically the case that edges which are locally Braess but not Braess (false positive predictions) will see that the phases of the respective oscillators to swap at some intermediate value of $a$, whereas edges which are Braess but not locally Braess (false negatives) will see that the respective entries in the critical eigenvector will swap. See examples in Section~\ref{sec:numerics} and Section~\ref{sec:motifs} for more details.

Suppose that the phase difference between the two oscillators $p$ and $q$  is less than $\pi$ in magnitude.  Theorem~\ref{thm:saddlenode} gives a simple criterion to identify whether the edge $(p,q)$ is locally Braess: if the oscillator phases' ordering is opposite to that of the respective elements' ordering of the entries in the (normalized) critical eigenvector, then the edge is locally Braess.  To make this precise, we specify a unique phase for each oscillator as the unique phase (subject to the phase condition $\theta_1=0$) for which $\lim_{K\to\infty} \theta_j(K)=0. $

One might also be interested in the behavior of the order parameter $r$ when a new edge is added to the network. The following results give the derivative of $r$ as a function of the parameter $a$.

\begin{thm}\label{thm:stablepert} Suppose that there exists a stable synchronous steady state solution $\mathbf{\theta}^*$.   Denote the eigenvalues of the linearization by $\lambda_k$ with the assumption that $0=\lambda_1>\lambda_2\geq \lambda_3\geq \dots\geq \lambda_n$ with corresponding orthonormal eigenvectors $v_k$. Then there exists a steady state solution $\theta^*(a)$ and the order parameter $r(\theta^*(a))$ is locally a smooth function of $a$ whose derivative satisfies: 
\small{
\begin{equation} \label{eq:ravaries} \frac{d}{da} (\frac{r(\theta^*(a))^2}{2}) =-\frac{K_c(0)}{n^2}\sin\left(\theta^*_q(0)-\theta^*_p(0)\right) 
\sum_{k=2}^n \gamma_k\left(\frac{\mathbf{v}_{k,p}(0)-\mathbf{v}_{k,q}(0)}{\lambda_k}\right)\end{equation}
}
where 
\begin{equation} \gamma_k = \mathbf{w} \cdot \mathbf{v}_k, \qquad 
w_j = S\cos\theta_j^* - C\sin\theta_j^*, 
\end{equation}
and
\begin{equation}
C = \sum_{j=1}^n \cos\theta_j^*, \qquad S = \sum_{j=1}^n \sin\theta_j^*.
\end{equation}
\end{thm}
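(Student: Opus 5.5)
The plan is to apply the Implicit Function Theorem to $F(\btheta,K,a)=0$ at a fixed coupling $K$ (the one at which $\btheta^*$ is stable; the statement writes it as $K_c(0)$), and then differentiate $r^2/2$ along the resulting branch. The phase invariance of $F$ has to be handled first. The Jacobian $J=D_{\btheta}F(\btheta^*,K,0)$ is symmetric, because $A$ is symmetric and the entries are $K\,\cos(\theta_j-\theta_i)$ with the diagonal fixed by zero row sums. Its kernel is exactly $\operatorname{span}\{\bone\}$, by the assumption $0=\lambda_1>\lambda_2$. Also, $F$ always takes values in $\bone^\perp$: summing the rows of $s(\btheta)$ gives zero by antisymmetry, and $g_{pq}$ is a multiple of $\be_p-\be_q$, while $\sum\omega_i=0$. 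So I will restrict $F$ to the map $\bone^\perp\times\R\to\bone^\perp$, with the phase fixed by requiring $\bone^T\btheta$ to be constant. On $\bone^\perp$ the restricted Jacobian $J$ is invertible, with eigenpairs $(\lambda_k,\bv_k)$ for $k\ge2$. The Implicit Function Theorem then gives a smooth branch $\btheta^*(a)$. Since $r$ is rotation invariant, the choice of phase normalization does not affect $r(\btheta^*(a))$. Stability persists for small $a$ by continuity of the eigenvalues.

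Next I compute $\btheta'(0)$. Differentiating $F(\btheta(a),K,a)=0$ gives
\[
J\btheta'(0)=-K\,g_{pq}(\btheta^*)=-K\sin(\theta^*_q-\theta^*_p)(\be_p-\be_q).
\]
The right-hand side lies in $\bone^\perp$, so I expand it in the orthonormal eigenbasis:
\[
\btheta'(0)=-K\sin(\theta^*_q-\theta^*_p)\sum_{k=2}^n\frac{v_{k,p}-v_{k,q}}{\lambda_k}\,\bv_k .
\]
Any $\bone$ component in $\btheta'(0)$ is irrelevant, because the derivative of $r^2$ in the direction $\bone$ vanishes.

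Then I differentiate the order parameter. Write $n^2r^2=C^2+S^2$, with $C=\sum_j\cos\theta_j$ and $S=\sum_j\sin\theta_j$. This gives
\[
\partial_{\theta_j}\!\left(\tfrac{r^2}{2}\right)=\frac{1}{n^2}\left(-C\sin\theta_j+S\cos\theta_j\right)=\frac{w_j}{n^2},
\]
so $\frac{d}{da}\frac{r^2}{2}=\frac{1}{n^2}\,\bw\cdot\btheta'(0)$. Substituting the expansion and using $\gamma_k=\bw\cdot\bv_k$ yields \eqref{eq:ravaries}. As a consistency check, $\bw\cdot\bone=SC-CS=0$, which confirms that the $k=1$ mode contributes nothing.

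The main obstacle I expect is the bookkeeping around the degenerate direction $\bone$. I need to check carefully that the IFT applies on the quotient: both the restricted Jacobian and the range of $F$ must lie in $\bone^\perp$, and the orthogonality of the eigenvectors is what justifies the spectral expansion. Beyond that I only need to keep the sign conventions consistent, in particular $g_{pq}$ carrying $\be_p-\be_q$ together with the minus sign from the IFT. The rest is direct computation.
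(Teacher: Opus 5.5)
Your proposal is correct and follows essentially the same route as the paper: fix the rotational symmetry, apply the Implicit Function Theorem with the Jacobian inverted on the nonzero-eigenvalue subspace, expand $\btheta'(0)$ in the eigenbasis, and contract with the gradient $\bw/n^2$ of $r^2/2$. The differences are minor. You normalize the phase via $\bone^T\btheta$ rather than $\theta_1=0$. You also note explicitly that $\bw\cdot\bone=0$, which fills a step the paper leaves implicit: under the gauge $\theta_1=0$, the true $\btheta'(0)$ differs from the pseudo-inverse expression by a multiple of $\bone$.
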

\begin{proof}
    The proof follows by fixing the rotational symmetry and applying the Implicit Function Theorem and is presented in Appendix~\ref{appendixB}.
\end{proof}

Theorem~\ref{thm:saddlenode} and Theorem~\ref{thm:stablepert} generalize immediately to Kuramoto models on weighted graphs described by the system of equations:
\begin{equation}\label{eq:weightedkuramoto}
    \frac{d\theta_i}{dt} =\omega_i+K\sum_{j=1}^n W_{ij}\sin(\theta_j-\theta_i).
\end{equation}
The only difference between (\ref{eq:weightedkuramoto}) and (\ref{eq:kuramoto}) is that the matrix $W_{ij}$ can prescribe arbitrary positive weights to edges as opposed to the unitary weights prescribed by the non-zero entries of $A$.

\begin{remark}
    In the limit as $K\to K_c$ from above, we have that $\lambda_2\to 0$; see the statement of Theorem~\ref{thm:stablepert}.  Observe that in this limit, the dominant contribution guiding whether the order parameter is increasing or decreasing is exactly the product of $\sin(\theta_q^*-\theta_p^*)$ and the difference $v_{2,p}-v_{2,q}$.
\end{remark}

\begin{remark}
    In the limit as $K\to\infty$, we recover the criterion developed by Arola-Fernandez, Skardal, and Arenas\cite{arola2021geometric}; see Appendix~\ref{apc}.
\end{remark}

\section{Numerical investigations on random networks}
\label{sec:numerics}

We tested the prediction of Theorem~\ref{thm:saddlenode} on randomly generated oscillator networks. For a missing edge
$e=(p,q)$, define
\[
W^{e,a}=W+a(\mathbf e_p\mathbf e_q^T+\mathbf e_q\mathbf e_p^T),
\qquad 0\leq a\leq 1,
\]
so that $a=0$ is the original network and $a=1$ is the network after the full edge has been added. In the numerical computations, an edge was treated as Braess when
\[
\Delta K_c^{\mathrm{num}}(e)
=
K_c^{\mathrm{num}}(W^{e,1})-K_c^{\mathrm{num}}(W)>0.
\]
Here $K_c^{\mathrm{num}}$ denotes the numerically estimated synchronization threshold described below. The local prediction was obtained from the sign of the derivative in Theorem~\ref{thm:saddlenode},
\[
K_c'(0)=
\sin(\theta_q-\theta_p)K_c^2
\frac{v_p-v_q}{\mathbf v^T\Omega}.
\]
Thus $K_c'(0)>0$ predicts a Braess edge and $K_c'(0)<0$ predicts a non-Braess edge. In the implementation, the critical eigenvector is always oriented so that $\mathbf v^T\Omega>0$.

\paragraph{Network ensembles.}
We considered two different random graph models.  For \mbox{Erd\H{o}s--R\'enyi} graphs, we repeatedly sampled $G(n,p)$ with $p=0.35$ until the graph was connected. For planar random geometric graphs, node positions were sampled independently in $[0,1]^2$, and two nodes were connected whenever their Euclidean distance was at most $r=0.35$; this process was also repeated until the graph was connected. Natural frequencies were sampled independently from \texttt{Unif[-1,1]} and then shifted so that they had a mean of zero.  The phase gauge was fixed by setting $\theta_1=0$.

\paragraph{Computing the numerical threshold.}
The code does not compute an exact saddle-node bifurcation point. Instead, it estimates $K_c$ using numerical continuation. For each network, the solver begins at $K_{\max}=5$ and employs secant continuation to follow this branch of stable phase-locked solutions until stability is lost in a saddle-node bifurcation and the critical coupling constant is identified as the minimal constant $K$ along this branch of solutions.  

\paragraph{Filtering and evaluation.}
Our numerical investigations computed $\Delta K^{\mathrm{num}}_c(e)$ for ensembles of networks of Erd\"os-R\'enyi and random geometric graph types.  In each case, we compute the base critical value and the value of $K_c'(0)$ from Theorem~\ref{thm:saddlenode}.  To limit computational time, we only consider edges for which the predicted relative deviation in the critical coupling constant exceeds $0.1\%$.

Aggregated data over 20 different random geometric networks with $14$ nodes is shown in Figure~\ref{fig:RGNunfiltered}.  Edges are classified as true/false-positive/negative depending on whether our prediction based upon $K_c'(0)$ is accurate and if it predicts the edge to be Braess or not.  We report a relatively high number of false positive cases, which we attribute to two sources.  First, we see a significant number of false positive cases relating to the coupling of oscillators with a variety of frequencies, nearly between $-\pi$ and $\pi$.  Closer investigation reveals that most of these occurrences involve edges that are only weakly locally Braess in the sense that the predicted relative change in the critical coupling constant is small, i.e., close to the $0.1\%$ cutoff used in the numerical process.  We therefore refine our prediction to only consider edges with a predicted relative deviation exceeding $1\%$.  This data is reported in Figure~\ref{fig:RGN1420}.

\begin{figure}[t]
    \centering
    \includegraphics[width=0.95\columnwidth]{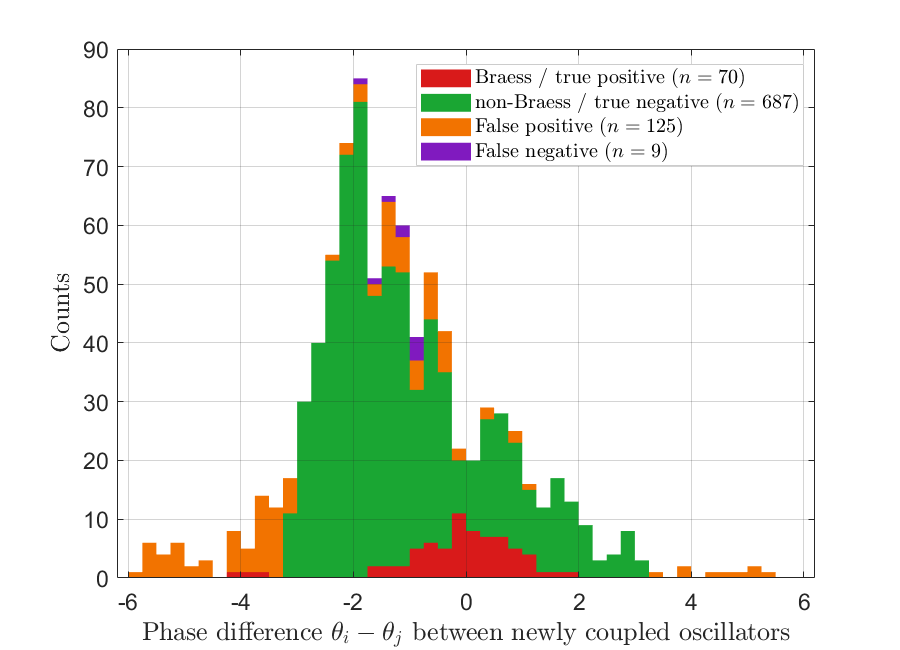}
    \caption{Histogram of Braess predictor outcomes relative to the phase difference between the coupled oscillators for $20$ realizations of random geometric graphs on $14$ nodes.  We consider only nodes where the relative change predicted by Theorem~\ref{thm:saddlenode} is greater than $0.1\%$.   Note that most edges involving the coupling of oscillators with phase differences greater than $\pi$ are not Braess, but are locally Braess.  See Figure~\ref{fig:RGN1420} for the data after filtering to only consider edges with predicted changes exceeding $1\%$.  }
    \label{fig:RGNunfiltered}
\end{figure}

The data in Figure~\ref{fig:RGN1420} shows that restricting consideration to only those edges with relative change exceeding $1\%$ improves our prediction for moderate phase differences, but false positive outcomes persist for edges that connect oscillators with phase differences exceeding $\pi$ in magnitude.  The reason for this will be studied in more detail in some example networks in Section~\ref{sec:motifs}.   A scatter plot of predicted versus observed changes in the critical coupling constants is also provided in Figure~\ref{fig:RGN1420}.  We note somewhat linear behavior associated with the data near the origin.  As expected, there are many more edges whose addition contributes a greater degree of synchronization (true negative cases), and the addition of some edges can lead to quite large decreases in the critical coupling values. 
\begin{figure*}[htbp]
    \centering
    \begin{minipage}[t]{0.49\textwidth}
        \centering
        \includegraphics[width=\textwidth]{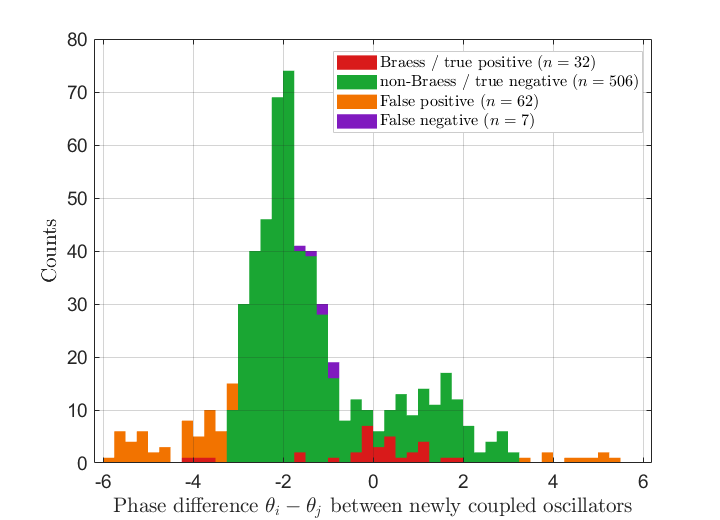}
    \end{minipage}
    \hspace{0.03\textwidth}
    \begin{minipage}[t]{0.42\textwidth}
        \centering
        \includegraphics[width=\textwidth]{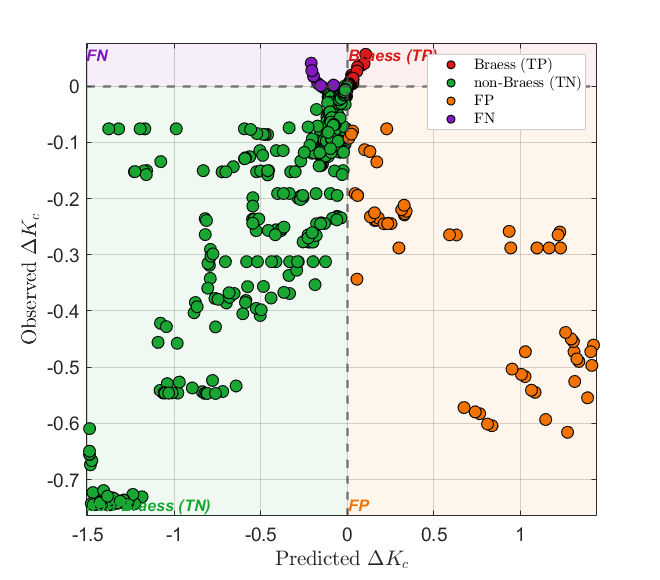}
    \end{minipage}

    \caption{(Left panel) Histogram of Braess predictor outcomes relative to the phase difference of coupled oscillators for twenty realizations of connected random graphs with $14$ nodes.  When restricted to edges coupling oscillators with absolute phase difference less than $\pi$, we find $506$ true negative cases, $29$ true positive cases, $7$ false negative cases, and $0$ false positive cases.  Thus, our predictor is accurate in over $98\%$ of these edges (phase difference less than $\pi$, relative deviation predicted to exceed $1\%$).  (Right panel) Scatter plot of observed deviations in $K_c$ versus the deviations predicted by Theorem~\ref{thm:saddlenode}.   }
    \label{fig:RGN1420}
\end{figure*}

Additional numerical investigations in support of $K_c'(0)$ being a useful predictor for the occurrence of Braess paradox are presented in Figures~\ref{fig:RGN30}-\ref{fig:ER2820}.  We present outcomes for twenty realizations each of random geometric graphs with $14$ and $30$ nodes and Erd\"os-R\'enyi random graphs with $14$ and $28$ nodes.  In each trial, we consider the addition of all edges for which the criterion in Theorem~\ref{thm:saddlenode} predicts a relative change in the critical coupling exceeding $0.1\%$, positive or negative.  Overall, we find that our predictor performs strongly whenever the phase difference between coupled oscillators is less than $\pi$ in magnitude.  For the random geometric graphs in particular, these results require filtering out edges whose predicted relative change falls below a more stringent $1\%$ threshold. To state some specific results, consider the example of random geometric graphs presented in Figure~\ref{fig:RGN1420}. When restricting our consideration to potential edges for which 
i) a relative change in $K_c$ of magnitude of $1\%$ or greater is predicted, and 
ii) coupling oscillators with an absolute phase difference less than $\pi$, we find $506$ true negative cases, $29$ true positive cases, $7$ false negative cases, and $0$ false positive cases.  
Thus, our predictor is correct in over $98\%$ of these cases.  Similar levels of success are obtained on larger networks and for Erd\"os-R\'enyi networks.

\begin{figure}[t]
    \centering
    \includegraphics[width=0.95\columnwidth]{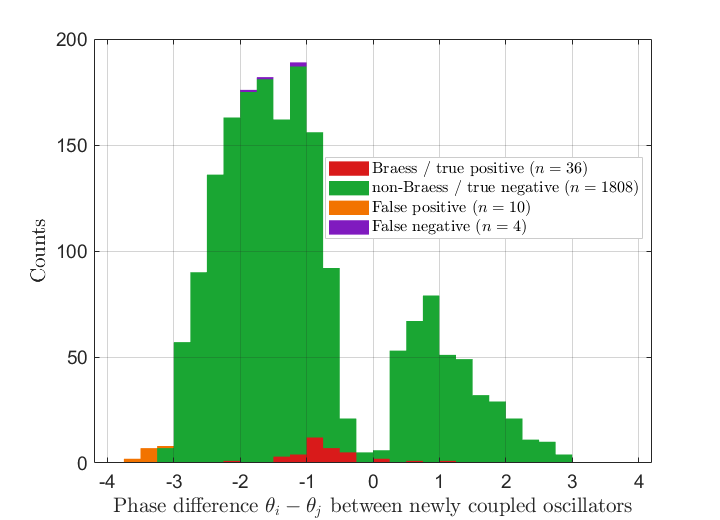}
    \caption{Histogram of Braess predictor outcomes relative to the phase difference of coupled oscillators for twenty realizations of connected random graphs with $30$ nodes, after restricting consideration to added edges with a predicted deviation in the critical coupling constant exceeding $1\%$.  }
    \label{fig:RGN30}
\end{figure}

\begin{figure*}[htbp]
    \centering


      \begin{minipage}[t]{0.42\textwidth}
        \centering
        \includegraphics[width=\textwidth]{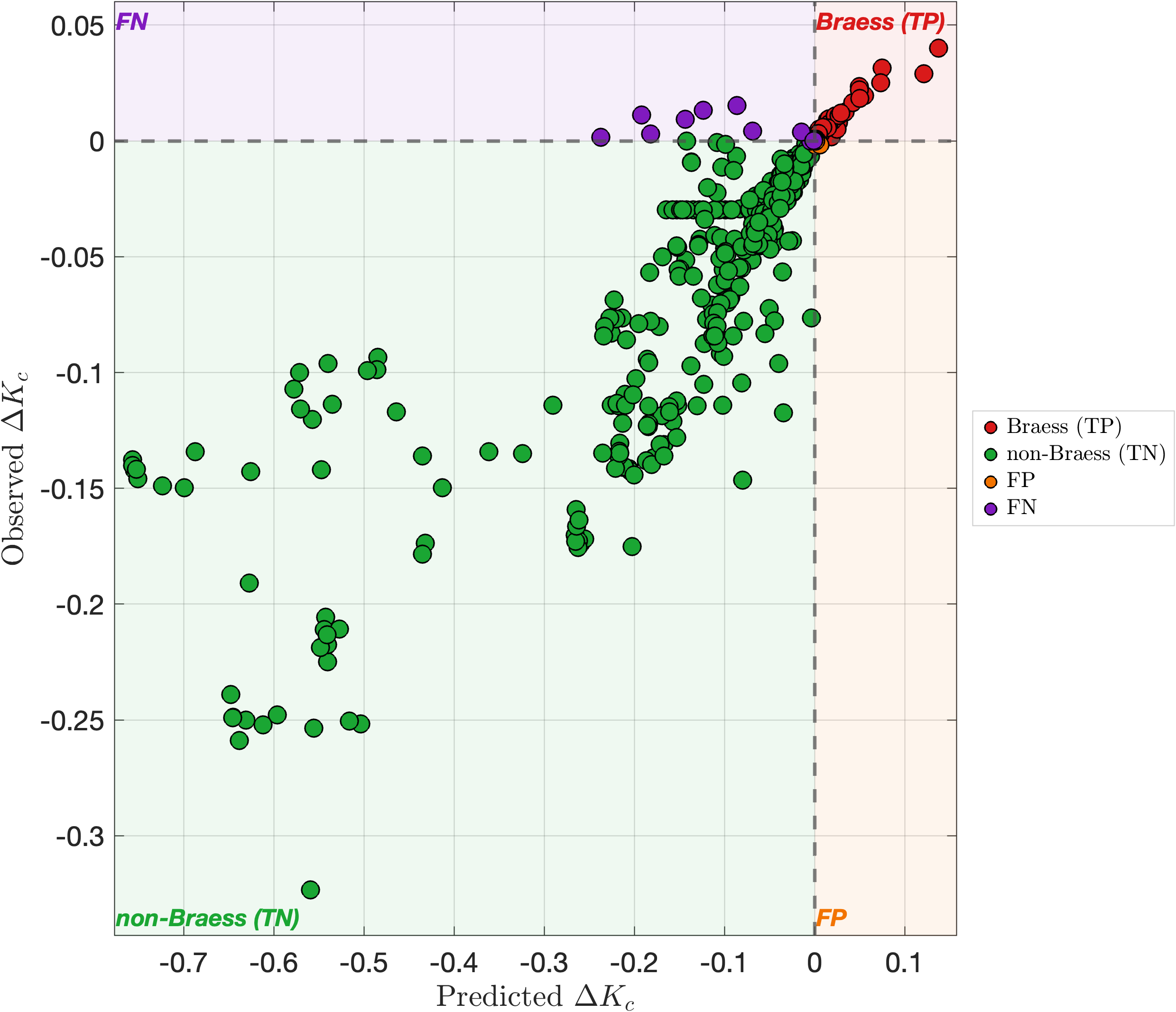}
    \end{minipage}
    \hspace{0.03\textwidth}
    \begin{minipage}[t]{0.405\textwidth}
        \centering
        \includegraphics[width=\textwidth]{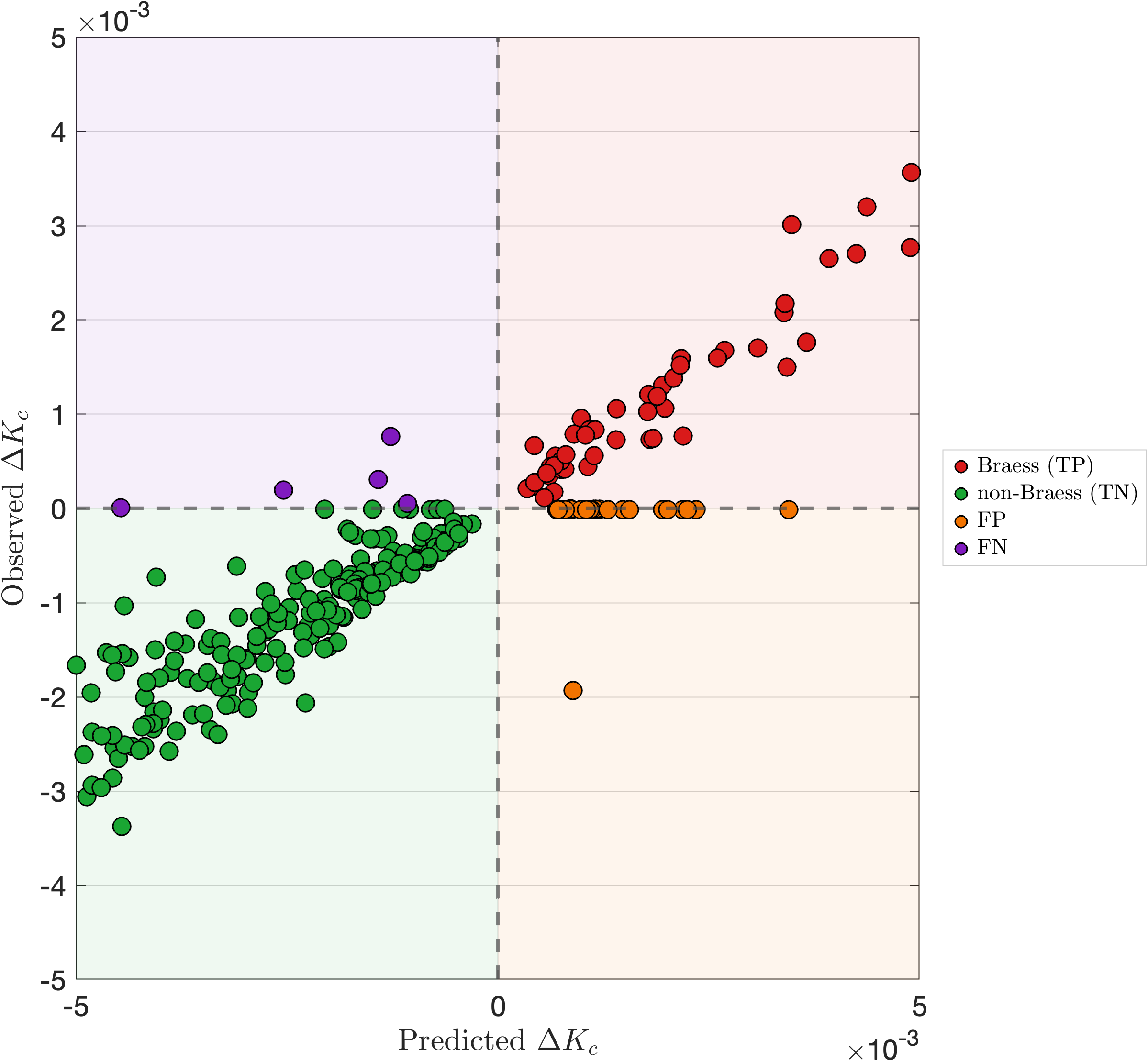}
    \end{minipage}

    \vspace{1em}

    \caption{(Left panel) Outcomes for twenty realizations of Erd\"os R\'enyi random networks with $14$ nodes.  Only edges with a predicted relative change in $K_c$ exceeding $0.1\%$ are included for consideration.  (Right panel) Same graph but zoomed in near the origin.        }
    \label{fig:ER1420}
\end{figure*}

\begin{figure*}[htbp]
    \centering
    \begin{minipage}[t]{0.37\textwidth}
        \centering
        \includegraphics[width=\textwidth]{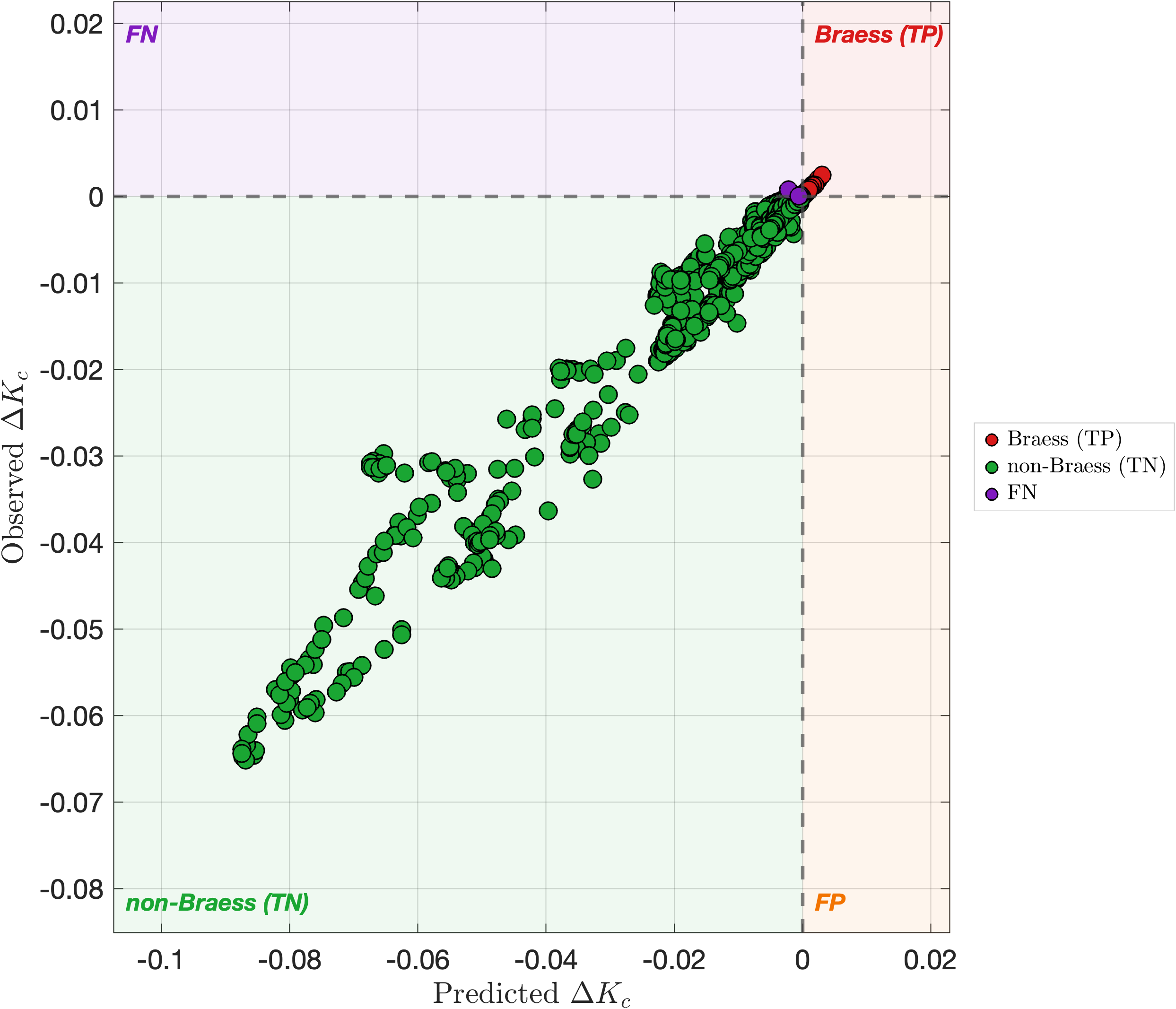}
    \end{minipage}
    \hspace{0.03\textwidth}
    \begin{minipage}[t]{0.42\textwidth}
        \centering
        \includegraphics[width=\textwidth]{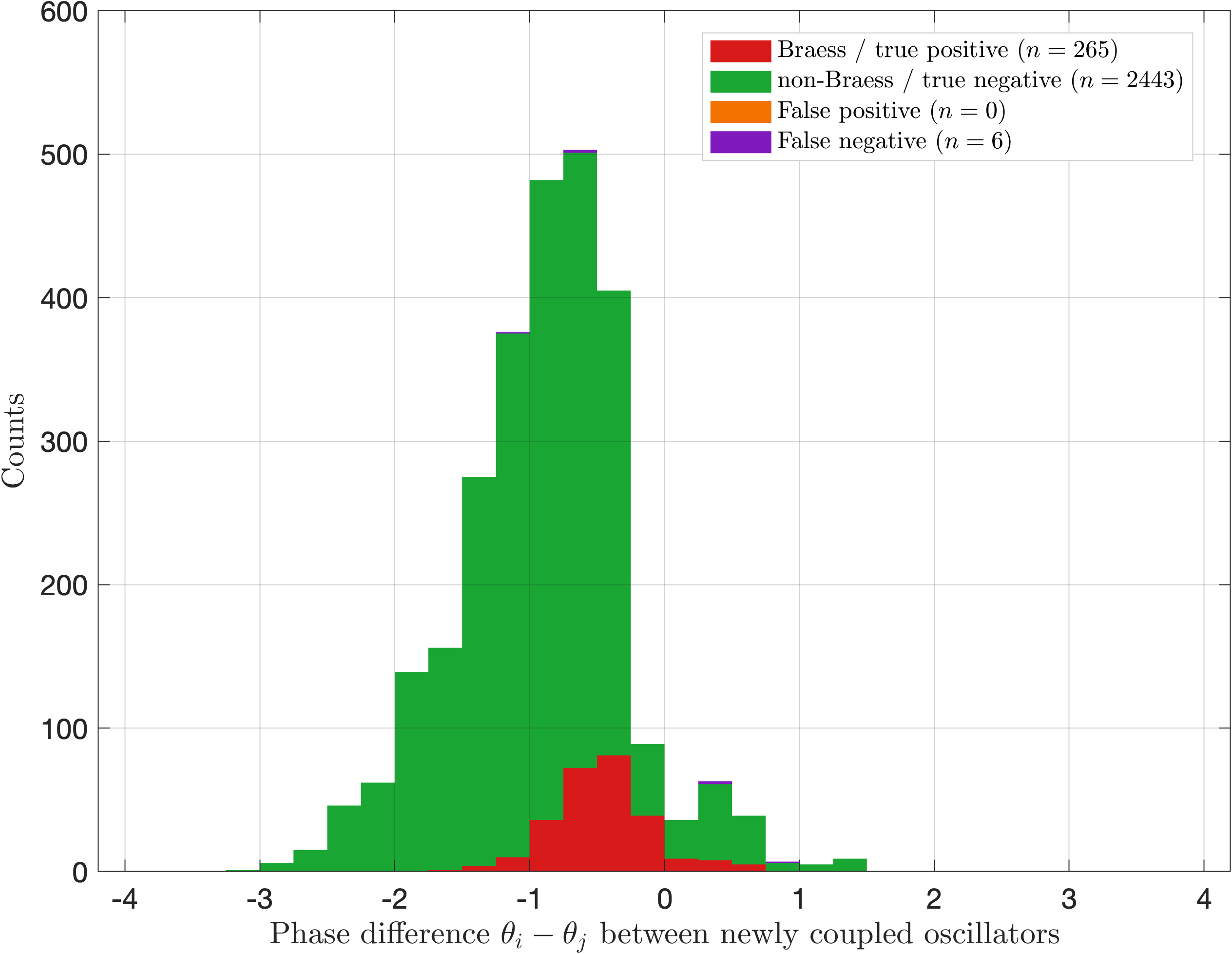}
    \end{minipage}

    \caption{(Left panel) Scatter plot of observed versus predicted changes in the critical coupling constant for twenty realizations of Erd\"os R\'enyi random networks with $28$ nodes.  Only edges with a predicted relative change in $K_c$ exceeding $0.1\%$ are included for consideration.  (Right panel) Histogram of the data presented in the left panel showing edge classification as a function of the phase difference between the coupled oscillators.  Note that the absolute change in $K_c$ for Braess edges is noticeably smaller for the larger network than the one with fewer nodes presented in Figure~\ref{fig:ER1420}; see the discussion of Erd\"os-R\'enyi graphs in Section~\ref{sec:motifs}.  }
    \label{fig:ER2820}
\end{figure*}

\paragraph{False positive and false negative edges}
Theorem~\ref{thm:saddlenode} provides some insight as to the mechanisms that might contribute to inaccuracies in our predictions.  Specifically, under generic assumptions, Theorem~\ref{thm:saddlenode} identifies two distinct possibilities by which $K_c(a)$ may not be monotone.  Suppose that $K_c'(0)>0$ indicating that $(p,q)$ is a locally Braess edge.  If this edge fails to be Braess, then necessarily, there must exist an $0<a^*<1$ for which $K_c'(a^*)=0$.  This can only occur if either 
i) the phases of the oscillators satisfy $\theta_p=\theta_q+k\pi$ for some $k\in \mathbb{Z}$, 
ii) the respective entries in the critical eigenvector match, i.e., $v_p=v_q$, or 
iii) a degeneracy occurs in which either the nontrivial critical eigenvector is no longer unique or remains unique, but the non-degeneracy condition $\bv^T\Omega\ne0$ fails.   
We observe that false positive cases where the phase difference between oscillators exceeds $\pi$ typically exhibit a case 
i) where increased coupling eventually draws the oscillators closer, and the derivative of $K_c'$ changes sign.  
We also note that most of the false negative cases that we observe arise due to 
ii) where the entries in the critical eigenvector coincide for some non-zero value of the coupling weight $a$.

\begin{figure*}[htbp]
    \centering

    \includegraphics[width=0.75\textwidth]{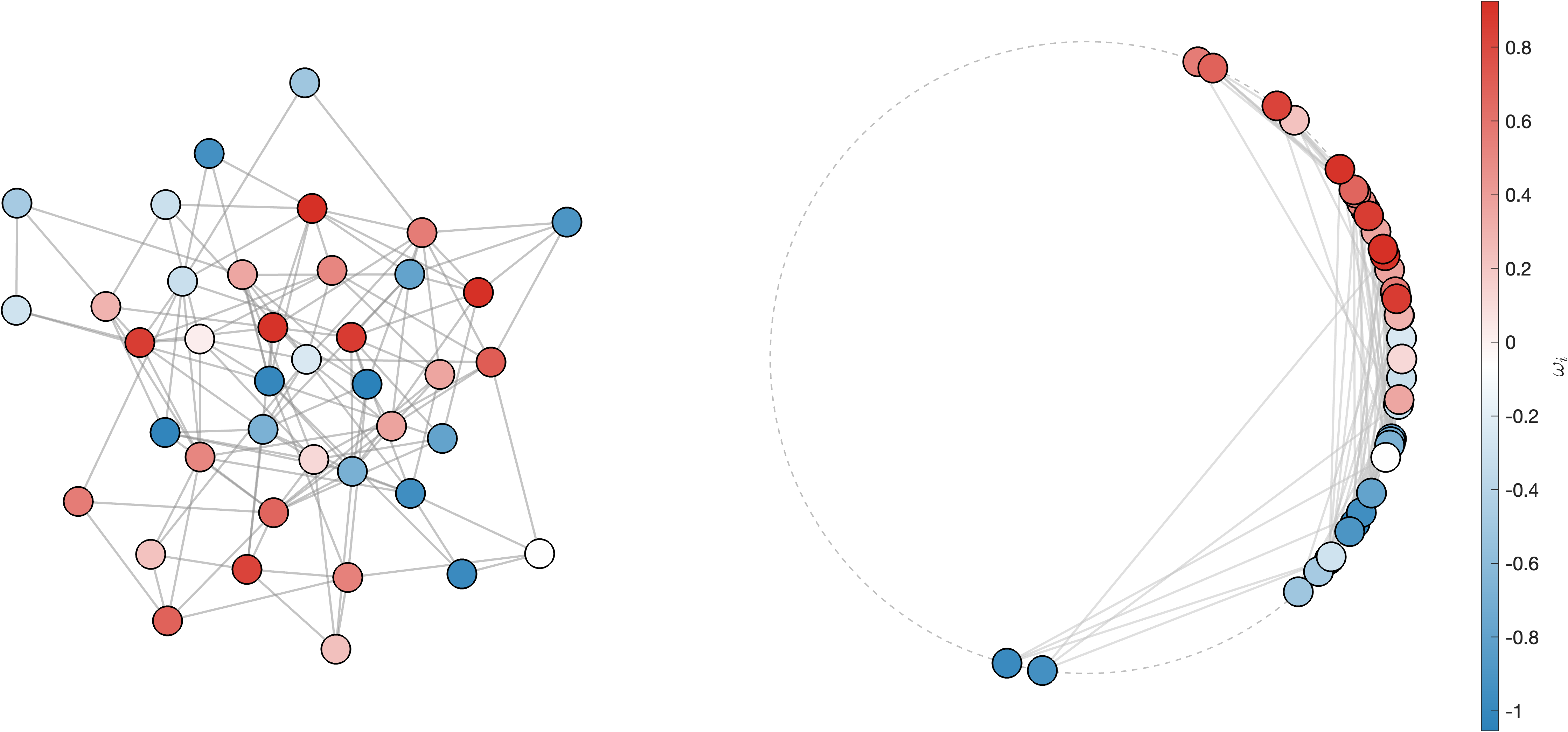}

    \vspace{1em}

    \begin{minipage}[t]{0.37\textwidth}
        \centering
        \includegraphics[width=\textwidth]{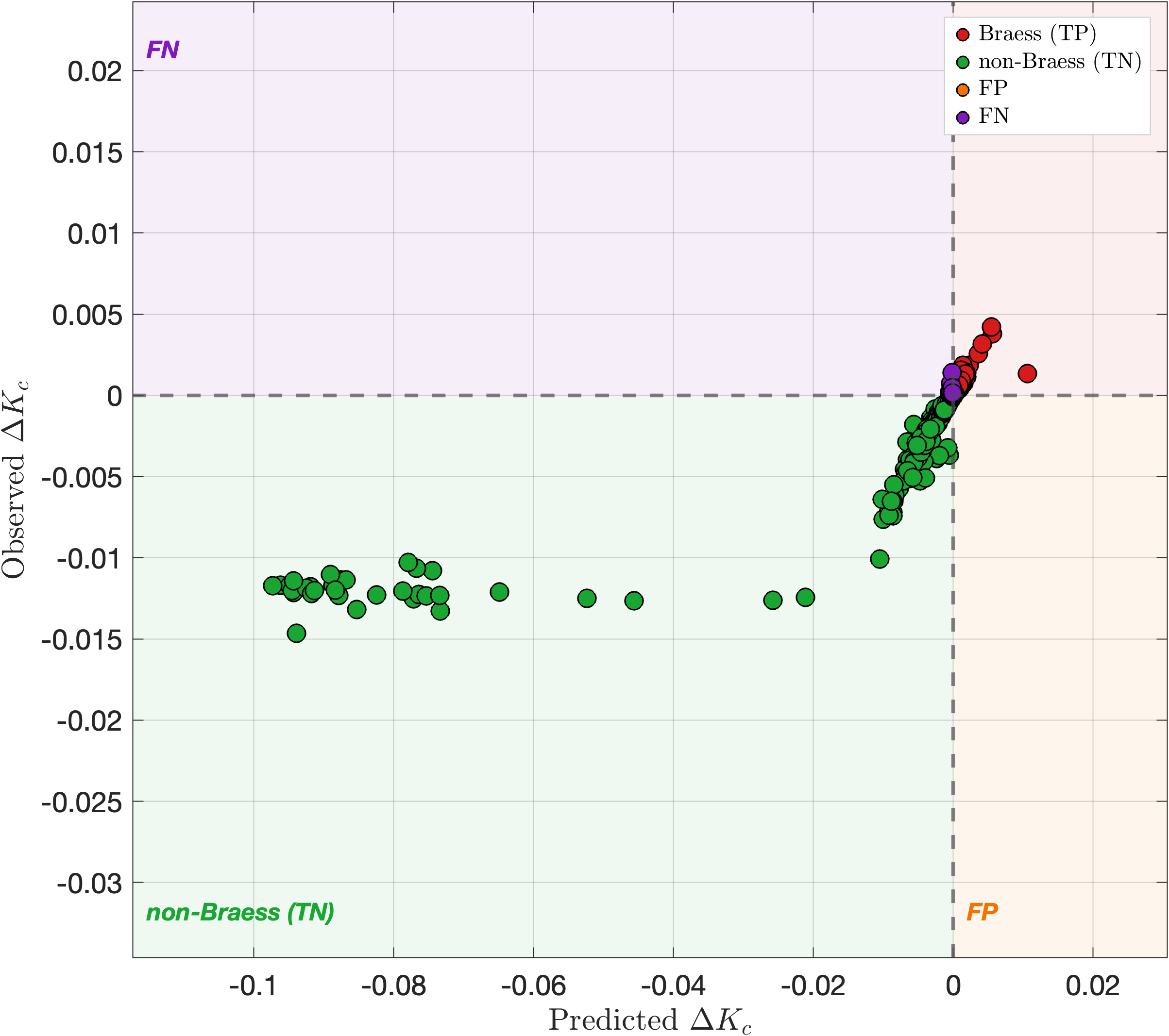}
    \end{minipage}
    \hspace{0.03\textwidth}
    \begin{minipage}[t]{0.42\textwidth}
        \centering
        \includegraphics[width=\textwidth]{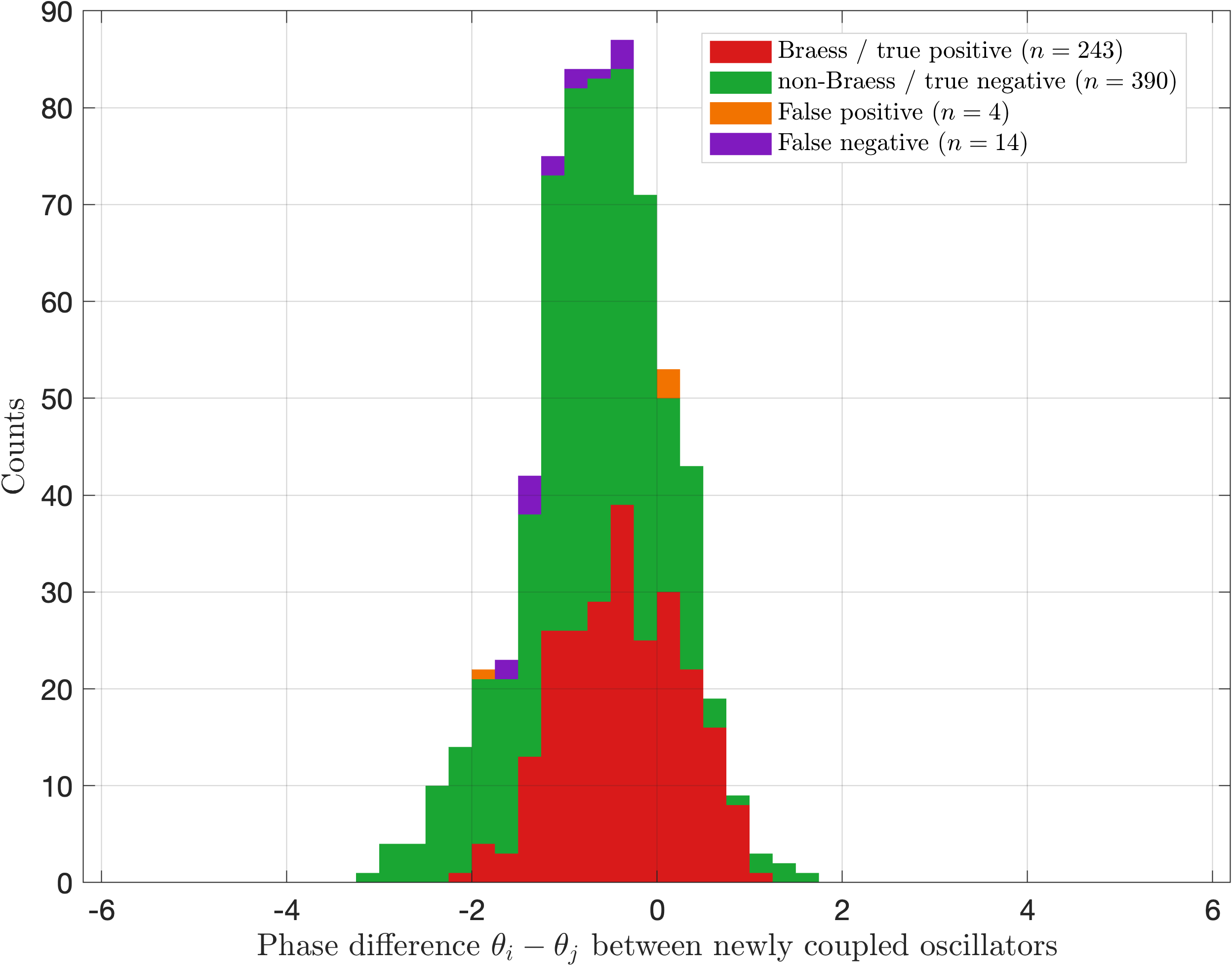}
    \end{minipage}
    \caption{Example Erd\"os-R\'enyi network and effect of edge addition on the critical coupling constant.  (Upper left panel) Network topology with nodes color-coded according to their natural frequency.  (Upper right panel) Network embedded in the unit circle with node location given by the phase at the critical value $K_c$.  (?Lower left panel) Scatter plot of observed vs predicted change in the critical coupling constant.  (Lower right panel) Histogram counting Braess and non-Braess edges as a function of the phase difference between the coupled oscillators.  Note that we do not filter edges with small $K'_c(0)$ values in this example.   }
    \label{fig:N401}
\end{figure*}

\paragraph{Representative examples.}
We provide four specific examples from our investigations.  Figure~\ref{fig:N401} and Figure~\ref{fig:N402} show Erd\"os R\'enyi random graphs with two qualitatively different outcomes.  Figure~\ref{fig:N401} represents a situation where our results work well.  The network has two differentiated nodes whose phases are distinct from the bulk of the network.  Added edges can either connect nodes within the bulk of the network or connect the differentiated nodes to nodes in the bulk.  The second case (bulk-to-isolated node connections) leads to large decreases in the critical coupling constant, although due to nonlinear effects, these decreases are limited in their total effect.  For nodes connecting two nodes in the bulk, our predictions work quite well with strongly linear behavior between predicted and observed deviations in the critical coupling.  Figure~\ref{fig:N402} shows a slightly different situation where there exists a single differentiated node that is connected to a single node in the bulk.  At the critical coupling constant, the critical eigenvector is piecewise constant, so additional edges that further couple the bulk to itself will have $K_c'(0)=0$ due to their critical eigenvector components being equal.  This situation is explained in more detail in the case of simple chain networks below.  

As we emphasized previously, our predictor is based upon the derivative $K_c'(0)$, and nonlinear effects could lead to false predictions. We show two examples of the graphs $K_c(a)$ showing how they may depend on the parameter $a$.  Figures~\ref{fig:true-positive-network}--\ref{fig:true-positive-kc-vs-a} 
show a representative true positive Braess edge in the case of a random geometric network.  
The highlighted added edge is predicted to be Braess by the sign of $K_c'(0)$,
and the full edge addition increases the numerically estimated threshold.
The sampled curve $K_c^{\mathrm{num}}(a)$ agrees with the initial direction
predicted by the derivative.  Figures~\ref{fig:false-positive-network}--\ref{fig:false-positive-kc-vs-a}
show a representative false positive example. The local derivative predicts
Braess behavior, but the full edge addition lowers the numerically estimated
critical coupling, illustrating the nonlinear limitation of the first-order
predictor.

\begin{figure}[t]
    \centering
    \includegraphics[width=0.95\columnwidth]{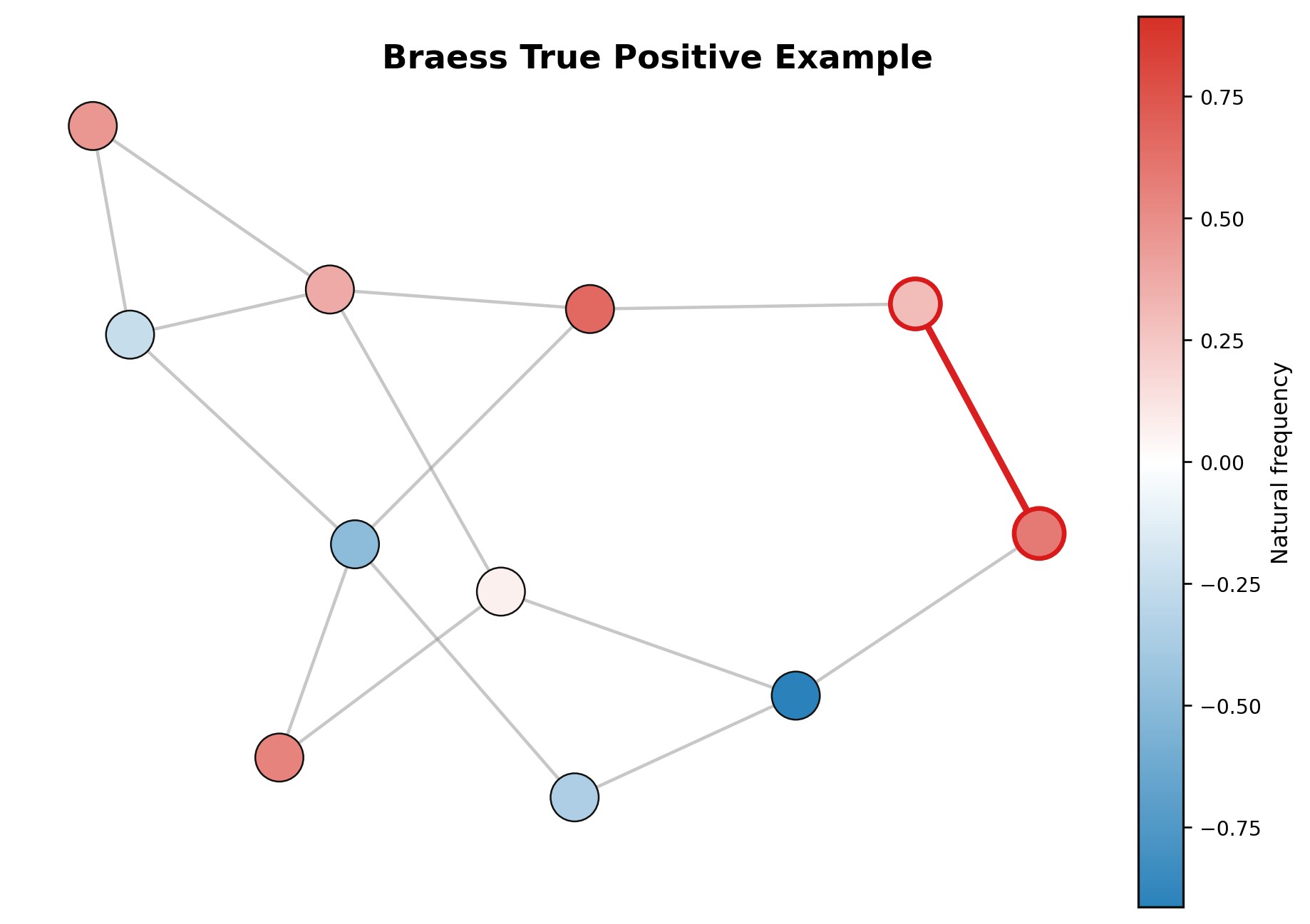}
    \caption{Random geometric network with a Braess edge identified (red edge).}
    \label{fig:true-positive-network}
\end{figure}

\begin{figure}[t]
    \centering
    \includegraphics[width=0.95\columnwidth]{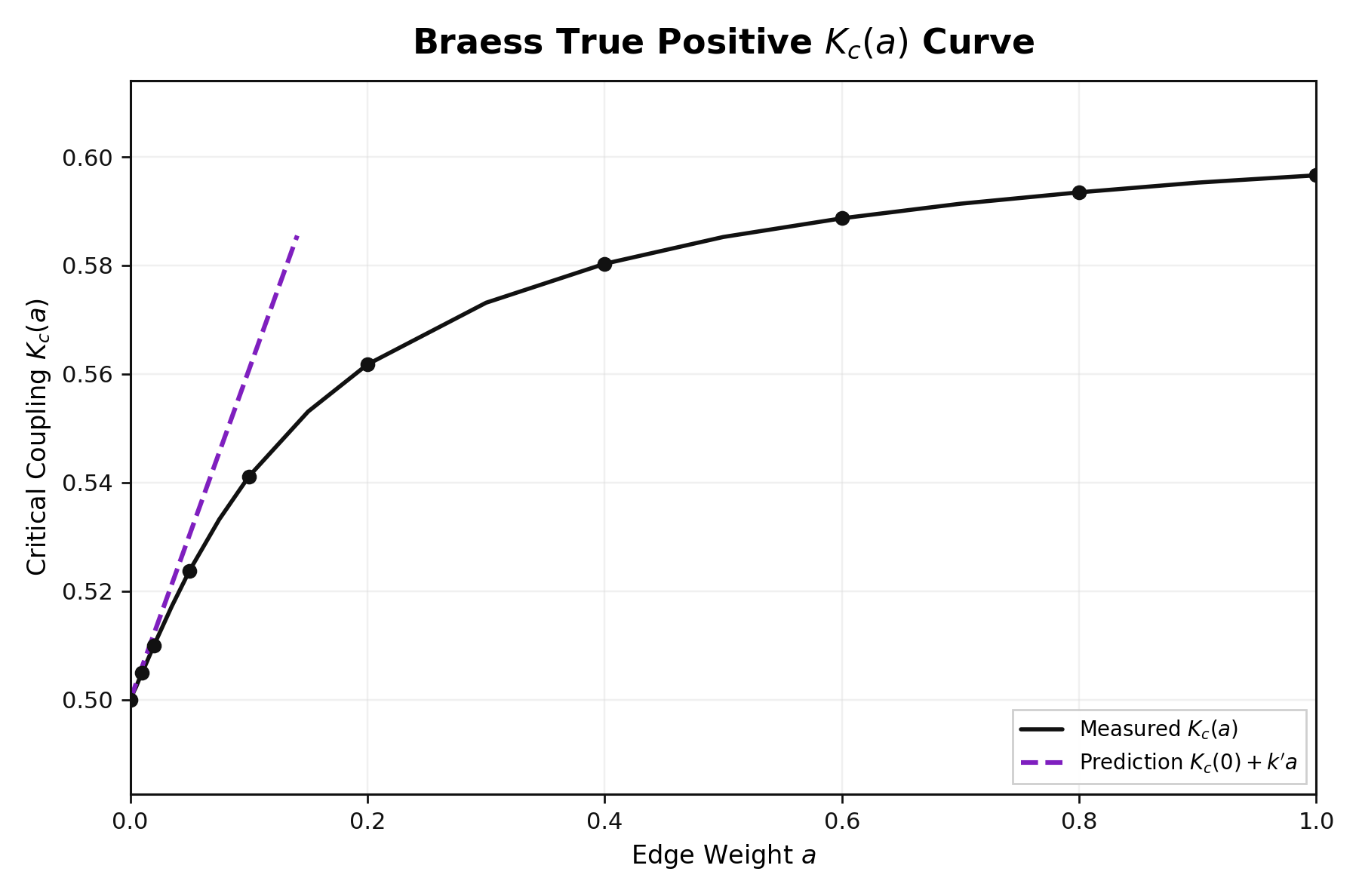}
    \caption{Critical coupling as a function of added edge weight $a$ associated to the red edge from Figure~\ref{fig:true-positive-network}.  The purple dashed line is the linear approximation guaranteed by Theorem~\ref{thm:saddlenode}.  The edge is therefore both a locally Braess edge ($K'_c(0)>0$ and a Braess edge $K_c(a)>K_c(0)$.  }
    \label{fig:true-positive-kc-vs-a}
\end{figure}

\begin{figure*}[htbp]
    \centering

    \begin{minipage}[t]{0.23\textwidth}
        \centering
        \includegraphics[width=\textwidth]{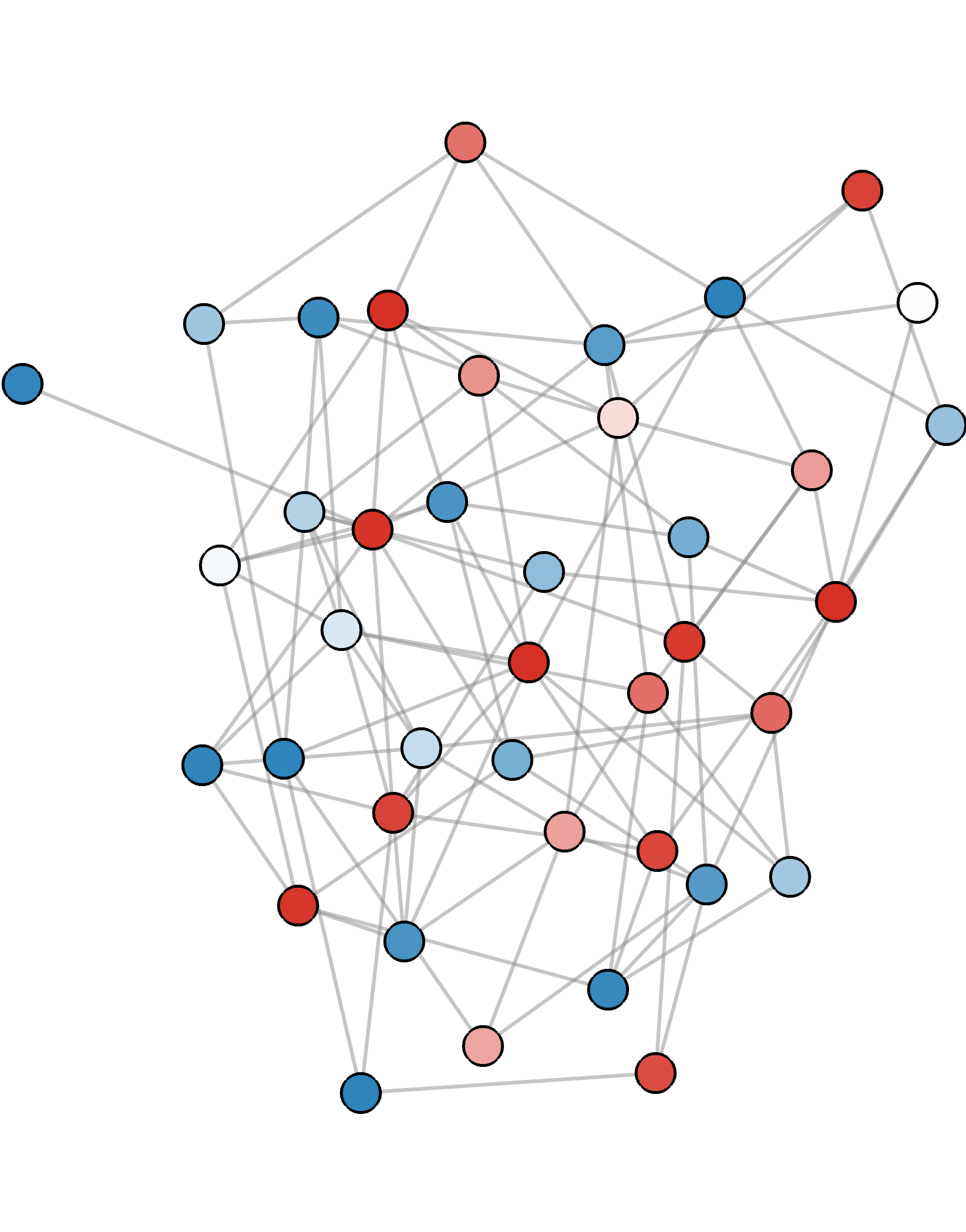}
    \end{minipage}
    \hfill
    \begin{minipage}[t]{0.27\textwidth}
        \centering
        \includegraphics[width=\textwidth]{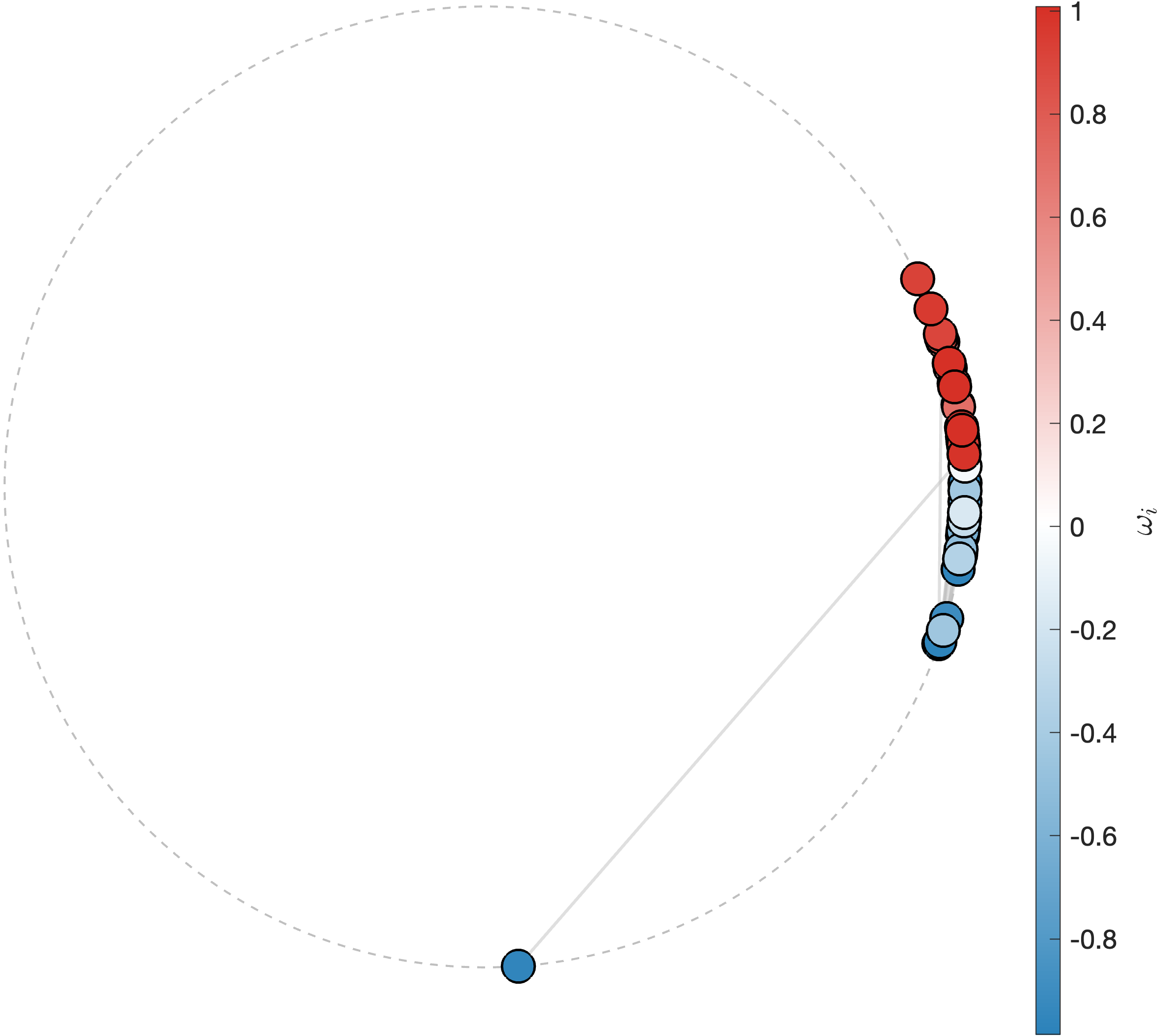}
    \end{minipage}
    \hfill
    \begin{minipage}[t]{0.36\textwidth}
        \centering
        \includegraphics[width=\textwidth]{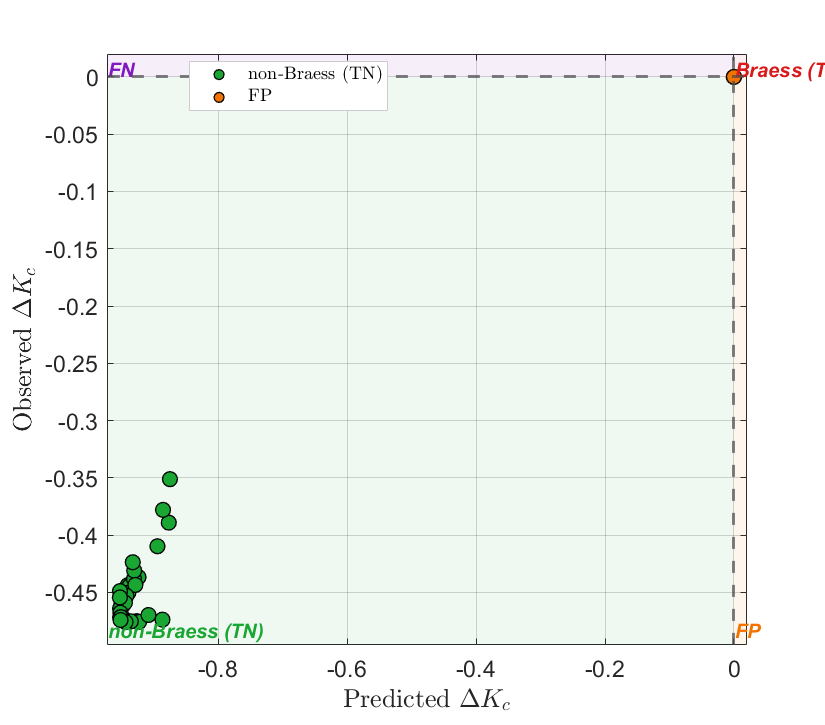}
    \end{minipage}

    \caption{Example Erd\"os-R\'enyi network and effect of edge addition on the critical coupling constant.  In contrast to Figure~\ref{fig:N401}, here the critical coupling value occurs due to a $\pi/2$ phase difference between the isolated node in the upper left of the network and its neighbor.  The situation here is exactly as in the chain example considered in Section~\ref{sec:motifs}.  The critical eigenvector is piecewise constant.  Adding edges between any two nodes lying in the bulk of the network has $K'_c(0)=0$ since the critical eigenvector components are equal. Thus, adding an edge between the isolated node and any node in the bulk will lead to a decrease in the critical coupling, as may be anticipated.  (Left panel) Network topology with nodes color-coded according to their natural frequency.  (Center panel) Network embedded in the unit circle with node location given by the phase at the critical value $K_c$.  (Right panel) Scatter plot of observed vs. predicted changes in the critical coupling constant for all possible edge additions.  There are a large number of edges where the local predictor is zero due to the piece-wise constant nature of the critical eigenvector, resulting in many data points near the origin in the scatter plot. }
    \label{fig:N402}
\end{figure*}

\begin{figure}[t]
    \centering
    \includegraphics[width=0.95\columnwidth]{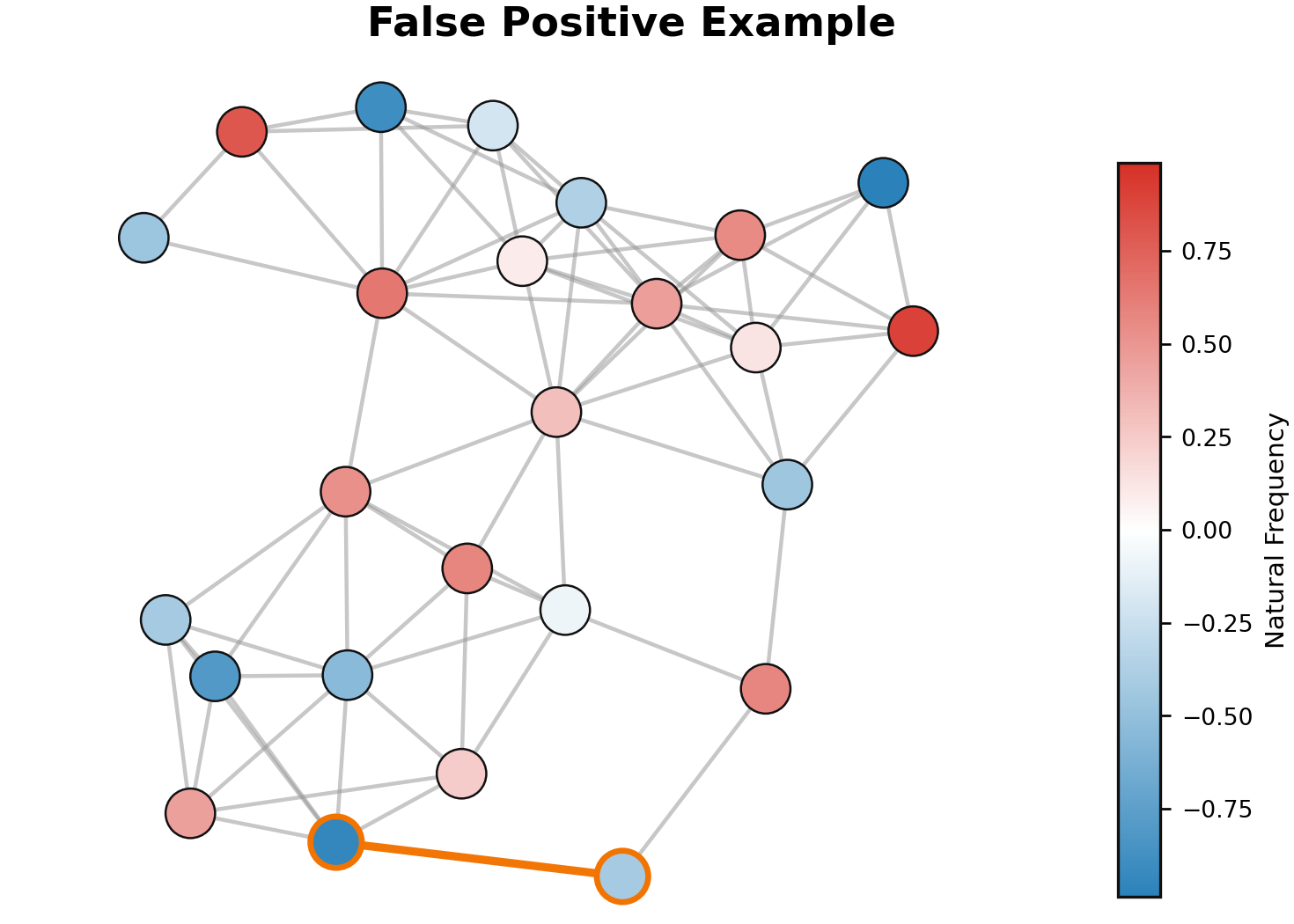}
    \caption{Random geometric network featuring an edge (orange edge) which is locally Braess but not Braess (false positive). }
    \label{fig:false-positive-network}
\end{figure}

\begin{figure}[t]
    \centering
    \includegraphics[width=0.95\columnwidth]{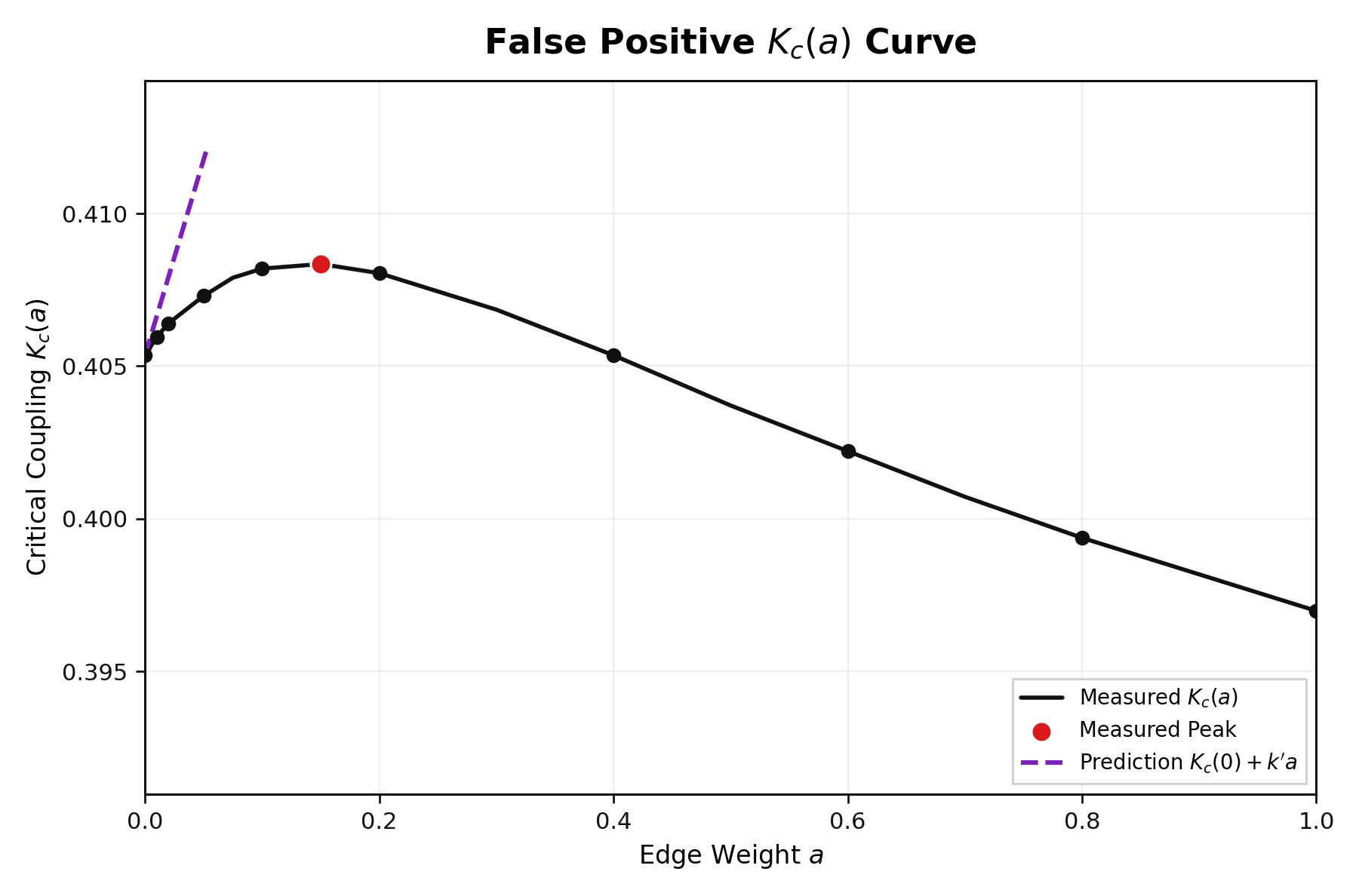}
    \caption{Critical coupling as a function of added edge weight $a$ for the representative false positive example from Figure~\ref{fig:false-positive-network}. The local derivative predicts Braess behavior, but the finite edge addition lowers the numerically estimated critical coupling by $a=1$.}
    \label{fig:false-positive-kc-vs-a}
\end{figure}

\section{Motifs and Examples }\label{sec:motifs}

In previous sections, we derived an explicit analytical criterion for the effect of a newly added edge on the critical coupling constant, and leveraged this local prediction to assess the impact of fully incorporating the edge into the network.  In this section, we consider several sample networks with simpler network topologies to illustrate various aspects of the theory.   We start with simple chain networks, where explicit computations are possible and to illustrate why we limit our classifier to only consider edges of coupling oscillators with phase differences less than $\pi$. We consider two network topologies from previous studies of Braess phenomena in coupled oscillator networks to compare and contrast 
our approach with prior work.  Finally, we briefly comment on the applicability of our results to large dense networks using some continuum limit analysis, making use of graphons.

\paragraph{Example: Chain Network}
The simplest way in which the linearization can gain a second zero eigenvalue is if there exists an edge in the graph such that if the edge is removed, the network becomes disconnected.  The most basic setting in which this arises is a chain network.  Slightly deviating from our notation thus far, we order nodes so that $(i,i+1)\in E$ for all $i=1,2,\dots,n-1$ and let $\omega_i$ be the natural frequency of oscillator $i$, so that the $\omega_i$ may no longer be increasing.  We still assume that $\Omega$ is mean zero, and we fix the phase condition $\theta_1=0$.  Let 
\[ \eta_k=\sum_{i=1}^{k-1} \omega_i.\]

Then there exists a steady state solution of the form 
\[ \theta_k=\theta_{k-1}+\mathrm{arcsin}\left(-\frac{\eta_k}{K}\right),\]
provided that $-K\leq \eta_k \leq K$ for all $k$.   We then obtain $K_c=\max\{|\eta_k|\}$.  When $K=K_c$ there exists an edge $(j,j+1)$ for which the phase difference $\theta_{j+1}-\theta_j=\pm\frac{\pi}{2}$. The Jacobian evaluated at this steady state is therefore block diagonal, and the critical eigenvector is piece-wise constant, taking the form:
\[ (v_c)_i=\left\{ \begin{array}{cc} \frac{1}{j} & i\leq j \\ -\frac{1}{n-j} & i>j \end{array}\right.. \]
Without loss of generality, we suppose $v_c^T\Omega>0$.  Now add the edge $(p,q)$ to the network, for some $(p,q)\neq (j,j+1)$. Theorem~\ref{thm:saddlenode} then implies that -- supposing that the phase difference between oscillators is less than $\pi$ in magnitude -- this edge will be locally Braess if and only if $p\leq j<q$, and $\theta_p<\theta_q$.

Now consider a 6 node chain with $\Omega=(-1,-1,-1,1,1,1)^T$, see Figure \ref{fig:chain6}.  Then $\eta=(0,-1,-2,-3,-2,-1)^T$ and we have $K_c=3$.  The critical eigenvector is a scalar multiple of the frequency vector: $\mathbf{v}_c=\frac{1}{\sqrt{6}}\Omega$.  Theorem~\ref{thm:saddlenode} then implies that $K_c'(0)\neq 0$ if and only if $p\leq 3<q$. Assuming then that $p\leq 3<q$ we find 
\[ K_c'(0)=-3\sqrt{6}\sin(\theta_q-\theta_p),\]
where we have used $K_c(0)=3$, $\bv_c^T\Omega=1$ and $v_p-v_q=-\frac{2}{\sqrt{6}}$.  We then obtain the following:  if the phase difference between the newly coupled oscillators is less than $\pi$, then $K_c'(0)<0$, the edge addition promotes synchrony, and is not locally Braess.  However, if the phase difference is larger (between $\pi$ and $2\pi$), then the edge is locally Braess and, at least for small $a$, the addition of the new edge will require a larger coupling constant to achieve synchrony.  

The reason for this is simple: when $a=0$, the critical coupling constant occurs because the phase difference between oscillators 3 and 4 equals $\pi/2$.  Coupling between oscillators 1 and 6 will draw these oscillators closer together, but because the phase differences are greater than $\pi$, this has a secondary effect of increasing phase differences between the remaining oscillators, contrary to what is required to establish synchrony; see Figure~\ref{fig:chain6}. Therefore, a larger coupling strength is required to maintain the phase-locked pattern. It is interesting that in many of the examples that we have considered, these locally Braess edges are not often Braess when the edge is fully incorporated in the network. 

\begin{figure}[htbp]
    \centering
    \includegraphics[width=0.45\textwidth]{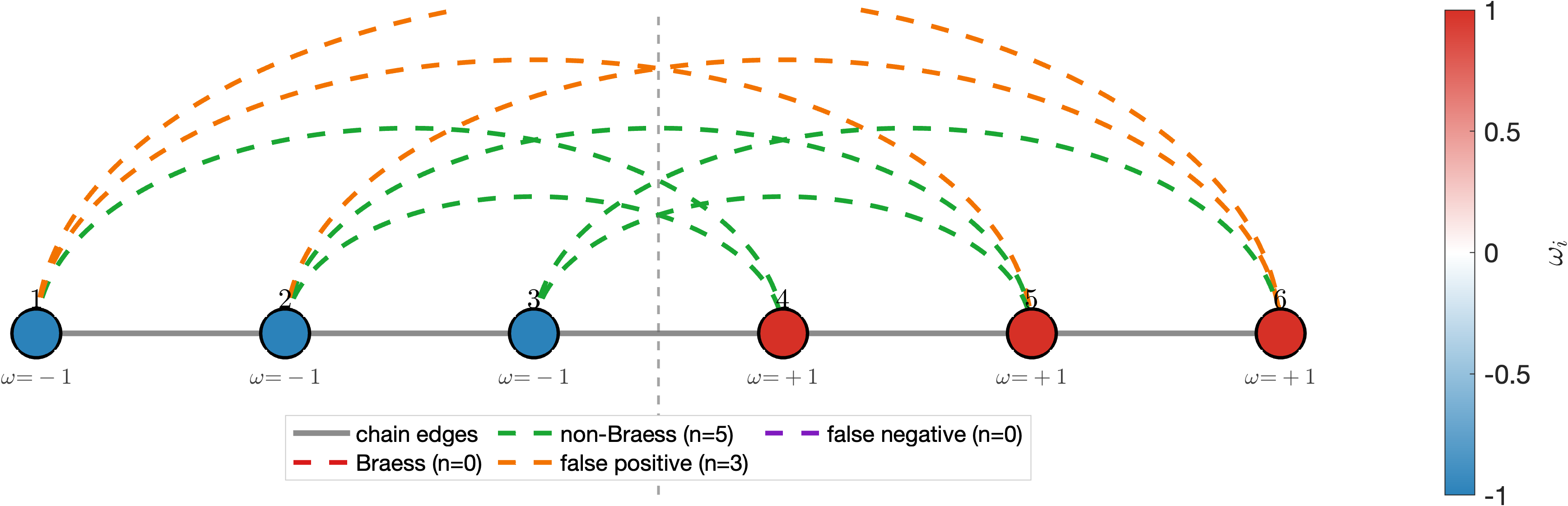}
    \vspace{1em}

    \begin{minipage}[t]{0.32\textwidth}
        \centering
        \includegraphics[width=\textwidth]{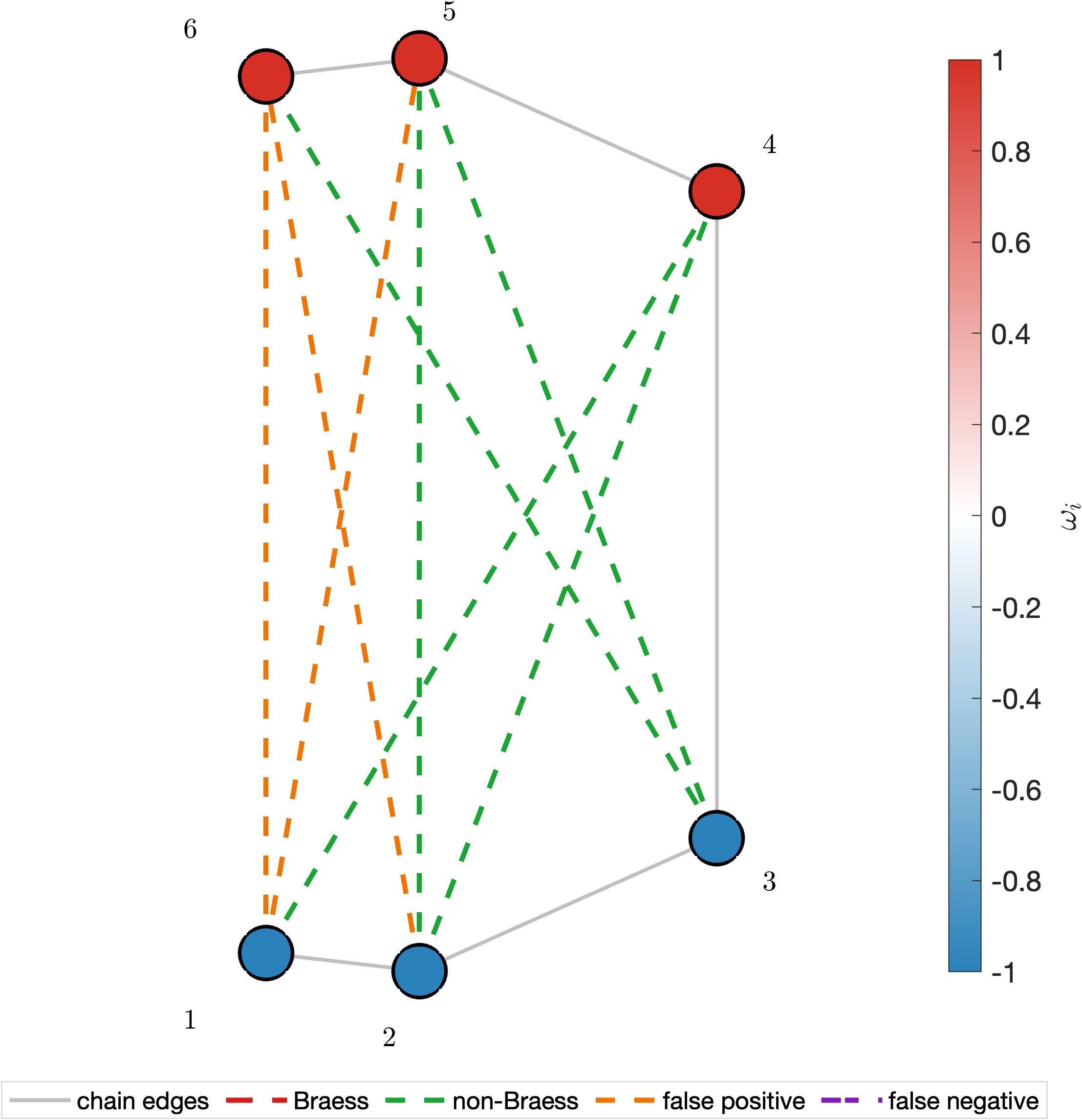}
    \end{minipage}
       \vspace{1em}
       
    \begin{minipage}[t]{0.42\textwidth}
        \centering
        \includegraphics[width=\textwidth]{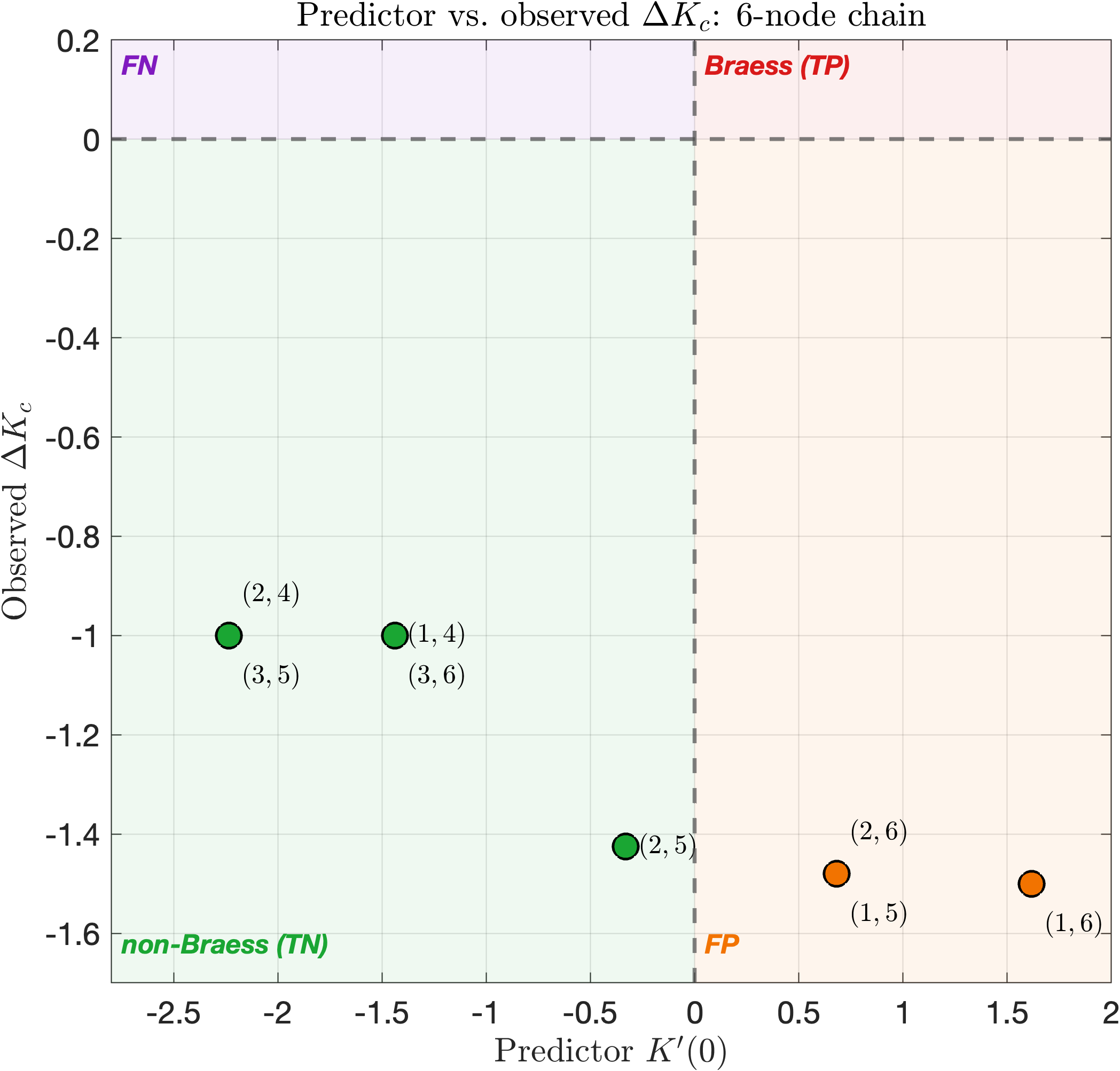}
    \end{minipage}

    \caption{%
    \textbf{6-node chain: Braess edge classification}
    ($\Omega = (-1,-1,-1,1,1,1)^T$, $K_c = 3$).
    \textit{Top:} All candidate non-edges drawn as dashed arcs;
    cross-partition edges ($p \leq 3 < q$) arc above the chain.
    \textit{Middle:} Nodes at their phase-locked positions
    $(\cos\theta_i, \sin\theta_i)$ at the saddle-node bifurcation.
    \textit{Bottom:} IFT predictor $K'_c(0)$ vs.\
    observed $\Delta K_c$; correctly classified edges lie in
    quadrants~I (Braess) and~III (non-Braess).  In accordance with our expectations, when the phase difference between oscillators exceeds $\pi$ in magnitude, then these edges are locally Braess but not Braess (false positive cases).  
    }
    \label{fig:chain6}
\end{figure}

Going beyond chain networks, a similar line of analysis may be applied to networks where the critical coupling is determined by the existence of an edge connecting two oscillators with a phase difference of $\pi/2$.  If removing this edge leads to a disconnected graph, then the analysis of the chain network can be immediately extended. To do so, consider the two subgraphs after the removal of this edge. Locally Braess edges are only possible for new edges spanning these two components.  We illustrate one such situation arising in an Erd\"os R\'enyi network in Figure~\ref{fig:N402}.

\paragraph{Example: Erd\"os R\'enyi random graphs in the large node limit}

Another perspective can be gained by passing to a continuum limit where the discrete adjacency matrix is replaced by a continuous function $W(x,y)$ -- referred to as a graphon\cite{borgs2008convergent,bramburger2025persistence,medvedev2019continuum,lovasz2012large} -- whereby the discrete system (\ref{eq:kuramoto}) is approximated by the nonlocal equation:
\begin{equation} \label{eq:graphon}
    u_t=\Omega(x)+K\int_0^1 W(x,y)\sin(u(y)-u(x))\mathrm{d}y,
\end{equation}
where $\Omega(x)$ is a continuous function describing the frequency of the oscillator at location $x$, and $W$ describes the strength of the connection between nodes at latent position $x$ and $y$.  We assume that $\Omega(x)$ is non-decreasing with $\Omega(0.5)=0$ and odd with respect to $x=0.5$ so that it has a mean of zero.   The existence of a phase-locked state $u^*(x)\in C[0,1]$ for various distribution functions $\Omega$ was studied by Ermentrout \cite{ermentrout1985synchronization} in the case of all-to-all coupling, where $W(x,y)=1$, while extensions to Erd\"os-R\'enyi random graphs with $W(x,y)=p<1$  were recently considered in Bramburger and Holzer\cite{bramburger2025capturing}.  For bimodal frequency distributions, the transition to synchrony in (\ref{eq:graphon}) occurs due to a saddle-node bifurcation at a critical coupling parameter $K_c$.  To study how changes in the graph impact the onset of synchrony, we consider the equation 
\begin{equation} \label{eq:graphon}
    u_t=\Omega(x)+K\int_0^1 \left[ W(x,y)+a\tilde{W}(x,y)\right]\sin(u(y)-u(x))\mathrm{d}y,
\end{equation}
where $\tilde{W}(x,y)\geq 0$ describes the addition of edges to the graph in accordance with the discrete setting in (\ref{eq:F}).  The following generalization of condition (\ref{eq:Kprime}) holds here:
\[ K_c'(0)=K_c(0)^2\frac{\int_0^1\int_0^1 v_c(x)\tilde{W}(x,y) \sin(u^*(y)-u^*(x))\mathrm{d}y\mathrm{d}x}{\int_0^1 \Omega(x)v_c(x)\mathrm{d}x}.\]
Suppose for the moment that $W(x,y)$ is  supported near the pair of points $(x_p,x_q)$ and $(x_q,x_p)$ with $x_p<x_q$.  Then, normalizing $\bv_c$ so that the denominator is positive, we find
\[ K'_c(0)=C\sin(u^*(x_q)-u^*(x_p))\left(\bv_c(x_p)-\bv_c(x_q)\right), \]
for some $C>0$.  Then, since $u^*(x)$ is monotone increasing with range contained in the interval $[-\pi/2,\pi/2]$, we find that a Braess phenomenon can occur only if there exist points $x_p,x_q$ such that $x_p<x_q$ and $\bv_c(x_p)>\bv_c(x_q)$.  In other words, Braess phenomenon is possible if the critical eigenfunction $\bv_c(x)$ is non-monotone.  While explicit representations of $\bv_c(x)$ are not often available, for all the distribution functions $\Omega(x)$ that we checked, the critical eigenfunction $\bv_c(x)$ is monotone.  This suggests that there is no systematic way to add edges to an Erd\"os-R\'enyi graph which will lead to shifts in $K_c$ that remain order one in the limit as the number of nodes in the random graph tends to infinity.  In fact, any graphon perturbation $\tilde{W}(x,y)>0$ will tend to generically decrease the critical coupling threshold, especially if it enhances connections between nodes near $x=0$ and $x=1$.

\paragraph{Example: simple ring network with symmetry}

When the network under consideration has some symmetry, it may be the case that the Jacobian has a kernel of dimension greater than two, and Theorem~\ref{thm:saddlenode} will not apply.  In these situations, Theorem~\ref{thm:stablepert} remains applicable.  We illustrate this in the following example, studied also in Section~IV.D (Figure~2) of Manik, Witthaut and Timme~\cite{manik2017cycle}.  We deviate somewhat from the previous examples and consider here changes in graph weights as in (\ref{eq:weightedkuramoto}) rather than new edge addition.  The example is interesting in that it provides an example of a network modification which both shifts the $K_c$ value higher\cite{manik2017cycle} while also increasing the measure of the order parameter according to Theorem~\ref{thm:stablepert}.

Consider a 4-node ring network $1$-$2$-$3$-$4$-$1$ where nodes 
$1$ and $3$ have natural frequency $-1$ and nodes $2$ and $4$ 
have natural frequency $+1$, so that 
$\boldsymbol{\Omega}=(-1,1,-1,1)^T$. The network has the following 
steady state solution (imposing the phase condition $\theta_1=0$):
\[ \boldsymbol{\theta}^*(K)=\left(0,\bar{\theta},0,
\bar{\theta}\right)^T, \]
where $\bar{\theta}=\arcsin\left(\frac{1}{2K}\right)$, valid for 
any $K\geq \frac{1}{2}$. The critical coupling constant is therefore $K_c=\frac{1}{2}$. 
The Jacobian $DF(\boldsymbol{\theta}^*,K)$ takes the form:
\[
DF(\boldsymbol{\theta}^*,K)=K\cos\bar{\theta}
\begin{pmatrix}
-2 & 1 & 0 & 1 \\
1 & -2 & 1 & 0 \\
0 & 1 & -2 & 1 \\
1 & 0 & 1 & -2
\end{pmatrix},
\]
since every edge of the ring crosses the same phase gap
$\bar{\theta}$, contributing $K\cos\bar{\theta}$ to each
off-diagonal entry.

At $K=K_c=\frac{1}{2}$, we have $\bar{\theta}=\frac{\pi}{2}$,
hence $\cos\bar{\theta}=0$, so the Jacobian vanishes 
identically. The kernel of $DF(\boldsymbol{\theta}^*,K_c)$ 
is therefore all of $\mathbb{R}^4$, which is four-dimensional 
rather than the two-dimensional kernel required by 
Theorem~\ref{thm:saddlenode}. Consequently 
Theorem~\ref{thm:saddlenode} does not apply at this 
bifurcation point, and we instead apply Theorem~\ref{thm:stablepert} for $K>\frac{1}{2}$, where 
the steady state is stable and all eigenvalues $\lambda_2,\lambda_3,\lambda_4$ are strictly negative.

The Jacobian is $-K\cos\bar{\theta}$ times the 4-cycle
Laplacian, whose eigenvalues are $\{0,2,2,4\}$, giving
the ordered eigenvalues
\[
\lambda_1=0, \quad \lambda_2=\lambda_3=-2K\cos\bar{\theta},
\quad \lambda_4=-4K\cos\bar{\theta},
\]
with corresponding orthonormal eigenvectors $v_1=\frac{1}{2}\bf{1}$ and 
\[
v_2=\frac{1}{2}\begin{pmatrix}1\\1\\-1\\-1\end{pmatrix}, \
v_3=\frac{1}{2}\begin{pmatrix}1\\-1\\-1\\1\end{pmatrix}, \
v_4=\frac{1}{2}\begin{pmatrix}1\\-1\\1\\-1\end{pmatrix}.
\]
Note that $\lambda_2 = \lambda_3$ is a degenerate eigenvalue, arising because every edge of the ring crosses the same phase gap $\bar{\theta}$, so the Jacobian has uniform off-diagonal entries. Any linear combination of $v_2$ and $v_3$ is also a valid eigenvector for this eigenvalue; the choice above gives a convenient orthonormal basis.

The base network is the 4-cycle $1$-$2$-$3$-$4$-$1$ with all entries of the adjacency matrix equal to $1$, so that each edge carries effective coupling strength $K$. We consider the effect of increasing the weight of the existing edge $(p,q)=(2,3)$ by perturbing its weight from $1$ to $1+a$, applying Theorem~\ref{thm:stablepert}. Since $\boldsymbol{\theta}^*=(0,\bar{\theta},0,\bar{\theta})^T$, we compute
\[
C = \sum_{j=1}^4\cos\theta_j^* = 1+\cos\bar{\theta}+1+\cos\bar{\theta}
= 2+2\cos\bar{\theta},
\]
\[
S = \sum_{j=1}^4\sin\theta_j^* = 0+\sin\bar{\theta}+0+\sin\bar{\theta}
= 2\sin\bar{\theta},
\]
and applying $w_j = S\cos\theta_j^* - C\sin\theta_j^*$ at each node:
\[
w_1 = 2\sin\bar{\theta}\cdot 1 - (2+2\cos\bar{\theta})\cdot 0
= 2\sin\bar{\theta},
\]
\begin{align*}
    w_2 = & 2\sin\bar{\theta}\cdot\cos\bar{\theta}
-(2+2\cos\bar{\theta})\cdot\sin\bar{\theta}\\ 
= & 2\sin\bar{\theta}\cos\bar{\theta}
-2\sin\bar{\theta}-2\sin\bar{\theta}\cos\bar{\theta}\\
= & -2\sin\bar{\theta},
\end{align*}
\[
w_3 = 2\sin\bar{\theta}\cdot 1 - (2+2\cos\bar{\theta})\cdot 0
= 2\sin\bar{\theta},
\]
\[
w_4 = 2\sin\bar{\theta}\cdot\cos\bar{\theta}
-(2+2\cos\bar{\theta})\cdot\sin\bar{\theta}
= -2\sin\bar{\theta},
\]
giving
\[\mathbf{w} = 2\sin\bar{\theta}\,(1,-1,1,-1)^T.\]

Computing the projections $\gamma_k = \mathbf{w}\cdot v_k$:
\[
\gamma_2 = 2\sin\bar{\theta}\cdot\frac{1}{2}(1-1-1+1) = 0,
\]
\[
\gamma_3 = 2\sin\bar{\theta}\cdot\frac{1}{2}(1+1-1-1) = 0,
\]
\[
\gamma_4 = 2\sin\bar{\theta}\cdot\frac{1}{2}(1+1+1+1)
= 4\sin\bar{\theta}.
\]

For the edge $(p,q)=(2,3)$, we compute $v_{k,2}-v_{k,3}$:
\[
v_{2,2}-v_{2,3}=\frac{1}{2}(1)-\frac{1}{2}(-1)=1,
\]
\[
v_{3,2}-v_{3,3}=\frac{1}{2}(-1)-\frac{1}{2}(-1)=0,
\]
\[
v_{4,2}-v_{4,3}=\frac{1}{2}(-1)-\frac{1}{2}(1)=-1.
\]

Since $\gamma_2=\gamma_3=0$, only $k=4$ contributes:
\[
\sum_{k=2}^4\gamma_k\frac{v_{k,2}-v_{k,3}}{\lambda_k} =
\frac{4\sin\bar{\theta}\cdot -1}{-4K\cos\bar{\theta}} =
\frac{\sin\bar{\theta}}{K\cos\bar{\theta}}.
\]

Substituting into \eqref{eq:ravaries} with $n=4$ and
$\sin(\theta_3^*-\theta_2^*)=\sin(0-\bar{\theta})=-\sin\bar{\theta}$:
\[
\frac{d}{da} \left.\frac{r(\theta^*(a))^2}{2}\right|_{a=0} = -\frac{K}{16}\cdot-\sin\bar{\theta}\cdot
\frac{\sin\bar{\theta}}{K\cos\bar{\theta}} =
\frac{\sin^2\bar{\theta}}{16\cos\bar{\theta}}.
\]
Since $K>\frac{1}{2}$ implies $\bar{\theta}\in\left(0,\frac{\pi}{2}\right)$,
we have $\cos\bar{\theta}>0$ and $\sin\bar{\theta}>0$, so
$\frac{d}{da} r(\theta^*(a))>0$. This means that, for fixed $K>\frac{1}{2}$, the order parameter is locally increasing with respect to the edge weight perturbation $a$ at $a=0$; equivalently, increasing the weight of the edge $(2,3)$ from its nominal value increases the degree of synchronization.

\begin{remark}
We note that the two candidate new edges, the diagonals $(1,3)$ and $(2,4)$, both connect nodes having equal equilibrium phases, so $\sin(\theta_p^* - \theta_q^*) = 0$ for both, giving $\frac{d}{da} r(\theta^*(a))|_{a=0} = 0$ trivially. 
\end{remark}

In Manik {\em et al.}\cite{manik2017cycle} it is observed that the perturbation to the weights leads to an increase in the critical coupling constant and therefore the edge $(p,q)=(2,3)$ is locally Braess.  However, our application of Theorem~\ref{thm:stablepert} shows that, for any $K>K_c$, the order parameter also increases describing an increase in synchronization as measured by the order parameter $r(\theta)$; see Figure~\ref{fig:cycle}.   The two criteria thus operate on different aspects of the synchronization landscape.

\begin{figure}[htbp]
     \centering
     \includegraphics[width=0.45\textwidth]{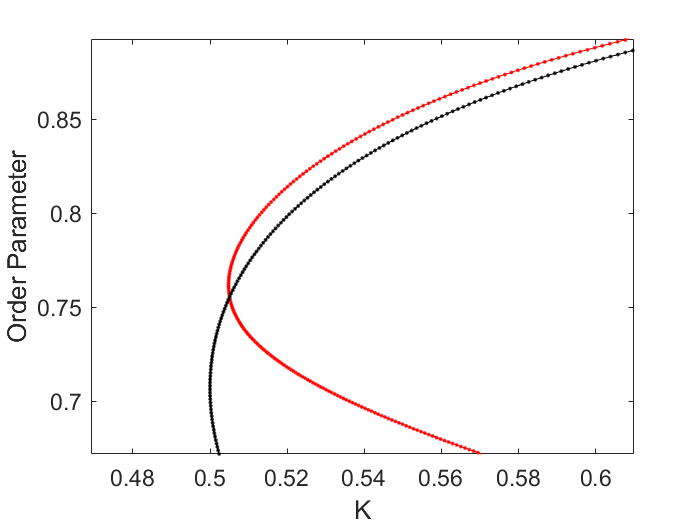}
     \caption{Order parameter as a function of the coupling constant $K$ for the cycle graph example presented in Section~\ref{sec:motifs}c.  Black depicts the order parameter associated to phase locked states with equal edge weights.  The critical coupling value is the minimum $K$ along each branch where the state undergoes a saddle-node bifurcation.   The red curve depicts the order parameter associated to phase locked states when the weight on edge $(p,q)=(2,3)$ is increased by $10\%$. Since $K_c$ shifts to a larger value we would classify this edge as locally Braess (although Theorem~\ref{thm:saddlenode} does not apply due to symmetry in the system) yet the overall order parameter simultaneously increases along the stable branch of solutions.  }
     \label{fig:cycle}
 \end{figure}

\section{Discussion}\label{sec:discussion}
In this work, we have studied the Braess paradox in Kuramoto oscillator 
networks through the lens of the Implicit Function Theorem. Our main result, Theorem~\ref{thm:saddlenode}, provides an analytical formula for the derivative $K_c'(0)$ that predicts whether adding a new edge to a network will raise or lower the critical coupling constant $K_c$, and thus whether the addition hinders or facilitates synchronization. The formula \eqref{eq:Kprime} is computationally inexpensive to evaluate: it requires only the phase-locked state $\boldsymbol{\theta}$, the critical eigenvector $\mathbf{v}$, and the natural frequencies $\boldsymbol{\Omega}$ of the 
base network, all of which are computed once and reused to screen every 
candidate edge. This stands in contrast to brute-force approaches that 
require a full numerical continuation for each candidate edge, which is 
computationally prohibitive for large networks.

The formula in Theorem~\ref{thm:saddlenode} admits a transparent 
geometric interpretation. The sign of $K_c'(0)$ is determined by the 
interplay between two quantities: the phase difference 
$\theta_q - \theta_p$ between the two oscillators to be connected, and 
the difference $v_p - v_q$ in the corresponding entries of the critical 
eigenvector. Specifically, the edge $(p,q)$ is predicted to be Braess 
when the phase ordering of the oscillators is \emph{opposite} to the 
ordering of their critical eigenvector components, provided the phase 
difference satisfies $|\theta_q - \theta_p| < \pi$. This dichotomy 
offers an intuitive explanation for the Braess phenomenon in oscillator 
networks: adding an edge between oscillators whose relative phases are 
misaligned with the critical mode of the network can inadvertently 
stiffen the system and raise the synchronization threshold. 

This geometric picture also clarifies when the predictor is expected to 
fail. When the phase difference $|\theta_q - \theta_p|$ exceeds $\pi$, then the addition of coupling between these two oscillators will have the downstream effect of increasing phase differences between the remaining oscillators, contrary to what is required for synchronization.  This leads to a larger coupling required for small coupling strengths, but interestingly, it seems to be the case that larger coupling strengths realign the oscillator phases so that the phase difference between oscillators spanning the coupled edge drops below $\pi$.  

Our numerical investigations, presented in Section~\ref{sec:numerics}, reveal that the predictor performs well 
across a range of network types, correctly classifying the majority of 
candidate edges as Braess or non-Braess. Two distinct failure modes were identified. \emph{False positives}, 
edges predicted to be Braess that are observed to be non-Braess, arise most frequently when the phase difference between the two oscillators is near $\pi$.  \emph{False negatives}, edges predicted to be non-Braess that are observed to be Braess, are rarer and tend to arise when the function $K_c(a)$ is non-monotone and the initial slope $K_c^{'}(0)<0$ underestimates the destabilizing effect of the new edge at finite weight $a$.

The $K_c(a)$ plots presented in Section~\ref{sec:numerics} further 
illuminate the limitations of the local prediction. While $K_c'(0)$ 
correctly predicts the initial direction of change for most edges, the 
function $K_c(a)$ can be non-monotone on $[0,1]$: an edge that initially 
raises $K_c$ may ultimately lower it, or vice versa. The chain 
example in Section~\ref{sec:motifs} provides a concrete illustration of 
this non-monotone behavior, where the interplay between local phase 
differences and global network topology produce a non-trivial dependence 
of $K_c$ on the edge weight $a$.

The motif analysis of Section~\ref{sec:motifs} reveals that the Braess phenomenon in oscillator networks is intimately connected to the 
structure of the critical eigenvector and the geometry of the 
phase-locked state. In chain networks, Braess edges arise when a new 
connection bridges two oscillators whose phase difference is large, 
effectively introducing a competing pathway that disrupts the phase 
coherence of the existing synchronous state. In ring networks with 
alternating frequencies, the high degree of symmetry leads to a 
degenerate critical point at which our theorem's hypotheses are not 
satisfied for the full nonlinear system, yet Theorem~\ref{thm:stablepert} 
can still be applied to characterize the behavior of the order parameter 
near stable synchronous states away from criticality.

The graphon perspective introduced in Section~\ref{sec:motifs} points 
toward a continuum limit in which our discrete formula may be replaced 
by an integral operator. This 
limit may allow for analytical predictions of Braess behavior in 
large random networks without requiring explicit computation of the 
critical eigenvector, and represents a promising direction for future 
work.   For dense Erd\"os R\'enyi networks, we presented an argument that there is no systematic way to add edges that are Braess when the number of nodes is taken to be sufficiently large.  We anticipate that other random network models based upon graphons should exist, such that they admit Braess edges in general.  One avenue to construct such graphs would be to consider stochastic block model graphons based upon lower-dimensional graphs that exhibit Braess edges.

Of course, several natural extensions of this work suggest themselves. First, the 
extension to weighted graphs described by~(\ref{eq:weightedkuramoto}) is 
immediate, as noted in Section~\ref{sec:theorem}, and raises the question 
of optimal edge weight allocation: given a budget of total edge weight to 
distribute across a set of candidate edges, how should the weights be 
chosen to minimize $K_c$? Second, the non-monotone behavior of $K_c(a)$ 
observed in Figure \ref{fig:false-positive-kc-vs-a} suggests that higher-order corrections 
to the IFT formula may provide a more accurate global predictor, 
particularly for edges near the decision boundary. Finally, the extension of our IFT-based classifier to directed and time-varying networks remains an open problem. In directed networks, the Jacobian $D_{\boldsymbol{\theta}}F$ is no longer symmetric, so the left and right kernels differ and the formula \eqref{eq:Kprime} must be modified accordingly. In time-varying networks, the notion of a phase-locked state itself requires reinterpretation. These extensions would broaden the applicability of the Braess paradox classifier to a wider class of real-world networks where directionality and non-stationarity are unavoidable features of the underlying dynamics.

\section*{Author Contributions} 
Author names are listed alphabetically.  
\textbf{Justin Arches:} Writing -- review \& editing.
\textbf{Emmanuel Fleurantin:} Conceptualization, Formal analysis, 
Methodology, Supervision, Writing -- original draft, 
Writing -- review \& editing.
\textbf{Matt Holzer:} Conceptualization, Formal analysis, Methodology, 
Project administration, Supervision, Writing -- original draft, 
Writing -- review \& editing.
\textbf{Siddhant Sood:} Software, Numerical Validation, Visualization, Writing -- review \& editing.
\textbf{Shakeeb Uddin:} Validation, 
Writing -- review \& editing.

\section*{Data Availability Statement}

The data that supports the findings of this study are available from the corresponding author upon reasonable request.

\section*{Acknowledgments}
M.H. was partially supported by the NSF grant DMS-2406623. The authors would like to thank the Mason Experimental Geometry Lab (MEGL) for providing a collaborative research environment that helped shape the ideas in this work. A special thanks goes out to Abdullah Hatif for his valuable contributions during the Fall 2025 semester, which helped lay the groundwork for this project.  Numerical continuation code written by Jason Bramburger in a prior collaboration\cite{bramburger2025capturing} was adapted to perform some of the numerical simulations conducted.  

\section*{Author Declarations}
The authors have no conflicts to disclose.
\begin{appendix}
    \section{Proof of Theorem~\ref{thm:saddlenode}}\label{appendixA}
Recall that throughout this section (and the manuscript), we impose $\displaystyle\sum_{i=1}^n \omega_i = 0$. This is
necessary for a phase-locked (steady-state) solution to exist, since summing
the steady-state equations over all $i$ gives $\sum_i \omega_i = 0$.

\begin{remark}
From the steady state map in Definition \ref{def:steadystate}, when $a = 0$, the map $F$ reduces to
\[
  F_i(\btheta, K, 0)
  = \omega_i + K \sum_{j=1}^n A_{ij} \sin(\theta_j - \theta_i),
\]
which is the original Kuramoto equation. For general $a$, the
$p$- and $q$-components of $F$ acquire the additional terms
\[
  F_p(\btheta, K, a)
  = \omega_p + K \!\sum_{j=1}^n A_{pj}\sin(\theta_j-\theta_p)
    + Ka\sin(\theta_q - \theta_p),
\]
\[
  F_q(\btheta, K, a)
  = \omega_q + K\! \sum_{j=1}^n A_{qj}\sin(\theta_j-\theta_q)
    + Ka\sin(\theta_p - \theta_q),
\]
while all other components are unchanged.
\end{remark}

Now fix $a = 0$. The Jacobian of $F$ with respect to $\btheta$ is

\begin{equation}
\label{eq:jacobian}
(D_{\btheta} F)_{ij} =
\begin{cases}
K A_{ij} \cos(\theta_j - \theta_i), & i \neq j, \\[4pt]
-\displaystyle\sum_{k \neq i} K A_{ik} \cos(\theta_k - \theta_i), & i = j.
\end{cases}
\end{equation}

\begin{lemma}[Kernel of the Jacobian]
\label{lem:kernel} Let $(\btheta_c, K_c)$ be a critical steady state, i.e.,
$F(\btheta_c, K_c, 0) = \bzero$ and $K_c$ is the smallest coupling for which
a steady state exists. Then $D_{\btheta}F(\btheta_c, K_c, 0)$ is a
weighted graph Laplacian, every row sums to zero, and
\[
  \dim \ker\!\bigl(D_{\btheta}F(\btheta_c, K_c, 0)\bigr) \geq 1.
\]
Moreover, there exists a nonzero vector $\bv$ such that
$D_{\btheta}F(\btheta_c, K_c, 0)\,\bv = \bzero$.
\end{lemma}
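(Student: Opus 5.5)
The plan is to read off all three assertions from the explicit form of the Jacobian in (\ref{eq:jacobian}), without using any bifurcation theory.

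First, set $w_{ij} = K_c A_{ij}\cos(\theta_{c,j}-\theta_{c,i})$ for $i\neq j$. Because $A$ is symmetric and cosine is even, $w_{ij}=w_{ji}$. By (\ref{eq:jacobian}),
\[ D_{\btheta}F(\btheta_c,K_c,0) = W - \operatorname{diag}\Bigl(\sum_{k\neq i} w_{ik}\Bigr). \]
This is (minus) the Laplacian of the weighted graph with edge weights $w_{ij}$. So it is a weighted graph Laplacian in the sense of the lemma. The weights need not be positive: they are nonnegative only when every phase difference along an edge lies in $[-\pi/2,\pi/2]$. The claims below do not need positivity.

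Second, each row sums to zero by construction: the diagonal entry in row $i$ is exactly minus the sum of the off-diagonal entries in that row. Hence $D_{\btheta}F\,\bone=\bzero$, so $\bone$ is a nonzero kernel vector and $\dim\ker\ge 1$. Taking $\bv=\bone$ already gives the "moreover" statement.

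The criticality assumption that $K_c$ is the smallest coupling admitting a steady state is not needed for these assertions. It matters only for the stronger fact used in Theorem~\ref{thm:saddlenode}, namely a second kernel vector $\bv_c\perp\bone$. If that is meant, I would argue by contradiction. Restrict $F$ to the invariant subspace $\bone^\perp$, or equivalently impose $\theta_1=0$; this is legitimate since $F$ maps into $\bone^\perp$ because $\sum\omega_i=0$ and the coupling terms are antisymmetric. Suppose the kernel were only $\operatorname{span}\{\bone\}$. Then the restricted Jacobian $D_{\btheta}F|_{\bone^\perp}$ is invertible. The Implicit Function Theorem then continues the solution to all $K$ near $K_c$, including $K<K_c$, which contradicts the minimality of $K_c$. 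Therefore the kernel must contain some $\bv\perp\bone$.

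The main obstacle is this reduction to $\bone^\perp$. It requires checking that $F(\cdot,K,0)$ takes values in $\bone^\perp$ and is invariant under $\btheta\mapsto\btheta+c\bone$, so that the Implicit Function Theorem applies to the reduced map. The rest is direct reading of (\ref{eq:jacobian}).
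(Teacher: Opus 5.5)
Your argument for what the lemma literally states matches the paper's. Both read the weighted-Laplacian structure and the zero row sums directly off (\ref{eq:jacobian}) and conclude $\bone\in\ker D_{\btheta}F$, so $\dim\ker\ge 1$.

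The routes differ at the last step. The paper's proof goes on to claim a kernel vector $\bv\notin\operatorname{span}\{\bone\}$, which is what Theorem~\ref{thm:saddlenode} and the augmented system actually need. Its only justification is the assertion that at the critical coupling ``the Jacobian acquires an additional zero eigenvalue.'' You instead note that the literal ``moreover'' is already satisfied by $\bv=\bone$, and then prove the stronger claim. You restrict $F(\cdot,K,0)$ to $\bone^\perp$, which is legitimate because $\bone^T F\equiv 0$ and $F$ is invariant under $\btheta\mapsto\btheta+c\bone$. If the kernel were only $\operatorname{span}\{\bone\}$, the restricted Jacobian would be invertible on $\bone^\perp$. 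The Implicit Function Theorem would then give steady states for $K<K_c$, contradicting minimality.

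This argument is sound and supplies exactly the justification the paper omits. It also shows that the minimality hypothesis is used only for this step. It adapts to the paper's main-text definition of $K_c$ as the fold point of a particular branch: the locally unique IFT continuation would extend that branch below its minimum. The paper's version is shorter only because it treats the extra zero eigenvalue as part of the standard saddle-node picture; yours makes that rigorous.

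One caveat: your contradiction gives $\dim\ker\ge 2$, not equality. The exactly two-dimensional kernel $\operatorname{span}\{\bone,\bv_c\}$ remains a separate hypothesis of Theorem~\ref{thm:saddlenode}.
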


\begin{proof}
    For $i \neq j$, the only term in $F_i$ that depends on $\theta_j$ is
$K A_{ij}\sin(\theta_j - \theta_i)$, so
\[
  \frac{\partial F_i}{\partial \theta_j}
  = K A_{ij} \cos(\theta_j - \theta_i), \qquad i \neq j.
\]
For the diagonal, every off-diagonal term $KA_{ij}\sin(\theta_j - \theta_i)$
depends on $\theta_i$, giving
\[
  \frac{\partial F_i}{\partial \theta_i}
  = -K \sum_{j \neq i} A_{ij} \cos(\theta_j - \theta_i).
\]
This yields the formula in \eqref{eq:jacobian}.

Now fix any row $i$. Then 
\begin{align*}
  \sum_{j=1}^n (D_{\btheta} F)_{ij}
  &= \frac{\partial F_i}{\partial \theta_i}
    + \sum_{j \neq i} \frac{\partial F_i}{\partial \theta_j} \\
  &= -\sum_{j \neq i} K A_{ij}\cos(\theta_j - \theta_i)
    + \sum_{j \neq i} K A_{ij}\cos(\theta_j - \theta_i) = 0.
\end{align*}

Hence $D_{\btheta}F\,\bone = \bzero$, so $\bone \in \ker(D_{\btheta}F)$ and $\dim\ker(D_{\btheta}F) \geq 1$.  Finally, the matrix $D_{\btheta}F$  has the form of a weighted graph Laplacian with edge weights $K_c A_{ij}\cos((\theta_c)_j - (\theta_c)_i)$. Because $K_c$ is the \emph{critical} coupling, the Jacobian acquires an additional zero eigenvalue beyond the structural one from above. Thus, there exists a nonzero $\bv \notin \operatorname{span}\{\bone\}$ satisfying $D_{\btheta}F(\btheta_c, K_c, 0)\,\bv = \bzero$, completing the proof.

\end{proof}

To apply the Implicit Function Theorem (IFT), we track the critical null vector $\bv$ as the edge weight $a$ varies. We encode the steady-state equation, the criticality equation, a normalization, an orthogonality constraint to isolate the non-trivial null vector, and a gauge fix into a single augmented system.

\begin{definition}{Augmented System $G$}
Define $G : \R^n \times \R^n \times \R \times \R \to \R^{2n+3}$ by
\[
G(\btheta, \bv, K, a) =
\begin{pmatrix}
F(\btheta, K, a) \\
D_{\btheta}F(\btheta, K, a)\,\bv \\
\bv^{\top} \bv - 1 \\
\bv^{\top} \bone \\
\theta_1
\end{pmatrix}
= \bzero.
\]
\end{definition}

The five blocks serve distinct purposes:
\begin{itemize}
\item $F(\btheta, K, a) = \bzero$: the steady-state equation ($n$ equations).
\item $D_{\btheta}F(\btheta, K, a)\,\bv = \bzero$: the matrix-vector product tracking the non-trivial null vector of the Jacobian at criticality ($n$ equations).
\item $\bv^{\top}\bv - 1 = 0$: normalization of $\bv$ ($1$ equation).
\item $\bv^{\top}\bone = 0$: orthogonality of $\bv$ to the trivial Laplacian null vector $\bone$, isolating the non-trivial critical null vector ($1$ equation).
\item $\theta_1 = 0$: gauge condition fixing the rotational symmetry of the Kuramoto model ($1$ equation).
\end{itemize}
The free parameter is $a$; the solve-for variables are $(\btheta, \bv, K) \in \R^n \times \R^n \times \R = \R^{2n+1}$.

\begin{remark}
    At $a=0$ and at the critical point, $D_{\btheta}F$ has a two-dimensional kernel spanned by $\bone$ (trivial, present for all $K$) and $\bv$ (non-trivial, appearing only at $K = K_c$). The constraint $\bv^{\top}\bone = 0$ removes the trivial direction, while $\bv^{\top}\bv = 1$ removes the scaling ambiguity. Together they enforce $\dim\ker(D_{\btheta}F) = 1$ in the restricted sense needed for IFT.
\end{remark}
The partial Jacobian $D_{(\btheta,\bv,K)}G$ is a $(2n+3) \times (2n+1)$ matrix given by
\begin{equation}
D_{(\btheta,\bv,K)}G =
\begin{pmatrix}
D_{\btheta}F & 0_{n\times n} & \partial_K F \\[4pt]
D_{\btheta}(D_{\btheta}F\cdot\bv) & D_{\btheta}F & \partial_K(D_{\btheta}F)\bv \\[4pt]
0_{1\times n} & 2\bv^T & 0 \\[4pt]
0_{1\times n} & \bone^T & 0 \\[4pt]
\be_1^T & 0_{1\times n} & 0
\end{pmatrix},
\end{equation}
where 
\begin{equation}
\partial_K F = s(\btheta) + ag_{pq}(\btheta),
\end{equation}
and
\begin{equation}
\partial_K(D_{\btheta}F)\bv = \frac{1}{K}D_{\btheta}F\cdot\bv.
\end{equation}
The last identity follows from the fact that $D_{\btheta}F$ is linear in $K$, so that $\partial_K(D_{\btheta}F) = \frac{1}{K}D_{\btheta}F$. At the critical point $(\btheta_c, \bv_c, K_c, 0)$, we have $D_{\btheta}F\cdot\bv_c = \bzero$, so the second block of the third column vanishes identically. The domain ($2n+1$ variables) and codomain ($2n+3$ equations) have a two-dimensional mismatch, so IFT cannot be applied directly. We resolve this via an orthogonal projection.

\begin{lemma}[Cokernel of $D_{(\btheta,\bv,K)}G$]
\label{lem:cokernel}
The Jacobian $DG := D_{(\btheta,\bv,K)}G$, holding $a$ fixed, has a
two-dimensional left null space (cokernel) spanned by the orthonormal vectors
\[
  \psi_1 = \frac{1}{\sqrt{n}}(\bone, \bzero, 0, 0, 0)^T,
  \qquad
  \psi_2 = \frac{1}{\sqrt{n}}(\bzero, \bone, 0, 0, 0)^T,
\]
where $\bone, \bzero \in \R^n$.
\end{lemma}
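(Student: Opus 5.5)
The plan is to verify two things directly at the critical point $(\btheta_c,\bv_c,K_c,0)$. First, $\psi_1$ and $\psi_2$ annihilate every column of $DG$. Second, $DG$ has full column rank $2n+1$. Since the codomain is $\R^{2n+3}$, rank $2n+1$ forces the left null space to be exactly two-dimensional, so it equals $\operatorname{span}\{\psi_1,\psi_2\}$. Orthonormality of $\psi_1,\psi_2$ is immediate: their supports are disjoint blocks and each has norm one.

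\emph{Step 1: $\psi_1^T DG=0$.} The row $\psi_1^T DG$ is $\frac{1}{\sqrt n}\bigl(\bone^T D_{\btheta}F,\ \bzero,\ \bone^T\partial_K F\bigr)$. By \eqref{eq:jacobian}, $D_{\btheta}F$ is symmetric with zero row sums (Lemma~\ref{lem:kernel}), so its column sums also vanish and $\bone^T D_{\btheta}F=0$. Next, $\bone^T s(\btheta)=\sum_{i,j}A_{ij}\sin(\theta_j-\theta_i)=0$ because the summand is antisymmetric in $(i,j)$. Likewise $\bone^T g_{pq}=0$. Hence the $K$-entry vanishes.

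\emph{Step 2: $\psi_2^T DG=0$.} The row $\psi_2^T DG$ is $\frac{1}{\sqrt n}\bigl(\bone^T D_{\btheta}(D_{\btheta}F\,\bv),\ \bone^T D_{\btheta}F,\ \tfrac1K\bone^T D_{\btheta}F\,\bv\bigr)$. The middle and last entries vanish as in Step 1. For the first entry, note that $\btheta\mapsto \bone^T D_{\btheta}F(\btheta,K,a)\bv$ is identically zero for every $\btheta$, since the Laplacian structure holds everywhere. Its $\btheta$-gradient $\bone^T D_{\btheta}(D_{\btheta}F\,\bv)$ is therefore zero.

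\emph{Step 3: $\ker DG=\{0\}$.} This is the main step. Suppose $(\boldsymbol{\phi},\bw,k)\in\ker DG$.
\begin{itemize}
\item The first block row gives $D_{\btheta}F\,\boldsymbol{\phi}+k\,s(\btheta_c)=0$. At the steady state, $s(\btheta_c)=-\Omega/K_c$. Pairing with $\bv_c$ and using the symmetry of $D_{\btheta}F$ yields $k\,\bv_c^T\Omega/K_c=0$. The hypothesis $\bv_c^T\Omega\neq0$ of Theorem~\ref{thm:saddlenode} then gives $k=0$.
\item With $k=0$, we have $\boldsymbol{\phi}\in\ker D_{\btheta}F=\operatorname{span}\{\bone,\bv_c\}$, so $\boldsymbol{\phi}=\alpha\bone+\beta\bv_c$. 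The phase row gives $\alpha+\beta (v_c)_1=0$.
\item The second block row gives $D^2_{\btheta}F[\bv_c,\boldsymbol{\phi}]+D_{\btheta}F\,\bw=0$. Pair this with $\bv_c$. Translation invariance $F(\btheta+t\bone)=F(\btheta)$ makes $D^2_{\btheta}F[\bv_c,\bone]=0$. What remains is $\beta\,\bv_c^T D^2_{\btheta}F[\bv_c,\bv_c]=0$.
\end{itemize}

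The anticipated obstacle is exactly here. Concluding $\beta=0$ requires the quadratic saddle-node nondegeneracy condition $\bv_c^T D^2_{\btheta}F(\btheta_c,K_c,0)[\bv_c,\bv_c]\neq0$. This is not literally listed among the hypotheses. My plan is to make it explicit as the generic fold condition implicit in calling the bifurcation a saddle-node; it should be stated alongside the lemma. Granting it, $\beta=0$, and then $\alpha=0$ from the phase row.

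With $\boldsymbol{\phi}=0$, the second block row reduces to $D_{\btheta}F\,\bw=0$, so $\bw\in\operatorname{span}\{\bone,\bv_c\}$. The rows $2\bv_c^T\bw=0$ and $\bone^T\bw=0$ then force $\bw=0$. Hence $DG$ is injective, its rank is $2n+1$, and its cokernel in $\R^{2n+3}$ is two-dimensional, spanned by $\psi_1,\psi_2$.
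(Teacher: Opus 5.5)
Your Steps 1 and 2 are the paper's argument. The paper likewise obtains $\psi_1^T DG=0$ and $\psi_2^T DG=0$ by differentiating the identities $\bone^T F\equiv 0$ and $\bone^T D_{\btheta}F\,\bv\equiv 0$, and reads off orthonormality from the disjoint supports. The real difference is the claim that the left null space is \emph{exactly} two-dimensional. The paper does not prove this; it invokes a non-degeneracy assumption that $\psi_1,\psi_2$ are the only left-null directions (``verified numerically'').

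Your Step 3 replaces that assumption with a rank argument. You derive $\ker DG=\{0\}$ from the two classical fold conditions:
\begin{itemize}
\item transversality, $\bv_c^T\bOmega\neq 0$ (equivalently $\bv_c^T\partial_K F\neq 0$, since $s(\btheta_c)=-\bOmega/K_c$);
\item the quadratic coefficient, $\bv_c^T D^2_{\btheta}F[\bv_c,\bv_c]\neq 0$.
\end{itemize}
The computation is correct. This includes using translation invariance to kill $D^2_{\btheta}F[\bv_c,\bone]$, and using the rows $2\bv_c^T\bw=0$ and $\bone^T\bw=0$ to kill $\bw$.

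The extra hypothesis you flag is not an artifact of your method. If the quadratic coefficient vanishes, take $\boldsymbol{\phi}=\bv_c-(v_c)_1\bone$ and $k=0$; this satisfies the first block row and the phase row. Then solve $D_{\btheta}F\,\bw=-D^2_{\btheta}F[\bv_c,\bv_c]$ with $\bw\perp\{\bone,\bv_c\}$. This gives a nonzero element of $\ker DG$. The same happens when $\bv_c^T\bOmega=0$. In either case the cokernel is at least three-dimensional, so the lemma as stated fails. Note also that Theorem~\ref{thm:saddlenode} imposes $\bv_c^T\bOmega\neq 0$ only for the derivative formula, whereas your argument shows it is already needed for the existence part.

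What your route buys is a precise, checkable substitute for the paper's numerical assumption, plus a bonus. Injectivity of $DG$, together with $\ran DG\subset\ran P$, makes the square matrix $D_{(\btheta,\bv,K)}H$ invertible. That is exactly condition (iv) of Lemma~\ref{lem:IFT}, which the paper also only checks numerically. The paper's route is shorter, but it leaves both the dimension count and the IFT step resting on unproved genericity.
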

\begin{proof} We show $\psi_1^T DG = 0$ and $\psi_2^T DG = 0$ then verify orthonormality. 

Now, the first block of $G$ is $F(\btheta, K, a)$. Since $\sum_i \omega_i=0$ and owing to the anti-symmetry of sine, the components of $F$ sum to zero for
\emph{all} $(\btheta, K, a)$:
\[
  \bone^T F(\btheta, K, a)
  = \sum_{i=1}^n \omega_i
    + K \sum_{i=1}^n \sum_{j=1}^n A_{ij}\sin(\theta_j - \theta_i)
    + Ka \sum_{i=1}^n g_{pq}(\btheta)_i.
\]
The first sum is zero since $\Omega$ is mean zero. For the second sum,
since $A_{ij} = A_{ji}$ and $\sin$ is odd, the $(i,j)$ and $(j,i)$ terms
cancel pairwise, giving zero. For the third sum, $g_{pq}(\btheta)
= \sin(\theta_p - \theta_q)(\be_q - \be_p)$, so
$\bone^T g_{pq}(\btheta) = \sin(\theta_p-\theta_q)(1 - 1) = 0$.
Hence $\bone^T F \equiv 0$ identically. Differentiating this identity with
respect to $(\btheta, \bv, K)$ gives
\[
  \psi_1^T DG
  = \frac{1}{\sqrt{n}}\,\bone^T D_{(\btheta,\bv,K)}G = 0.
\]

The second block of $G$ is $D_{\btheta}F(\btheta, K, a)\,\bv$. Since
$D_{\btheta}F$ is a symmetric matrix (the graph is undirected), its row
sums equal its column sums. We showed in Lemma~\ref{lem:kernel} that all
row sums are zero, so all column sums are also zero, i.e., $\bone^T D_{\btheta}F = 0^T$.
Therefore
\[
  \bone^T \bigl(D_{\btheta}F(\btheta, K, a)\,\bv\bigr)
  = \bigl(\bone^T D_{\btheta}F\bigr)\bv
  = 0
\]
for \emph{all} $(\btheta, \bv, K, a)$. Differentiating with respect to
$(\btheta, \bv, K)$ gives $\psi_2^T DG = 0$.  \\

For the orthonormality, note that 
\[
  \|\psi_1\|^2 = \frac{1}{n}\|\bone\|^2 = \frac{n}{n} = 1,
  \quad
  \|\psi_2\|^2 = \frac{1}{n}\|\bone\|^2 = 1,
  \quad
  \psi_1^T \psi_2 = 0,
\]
since the nonzero entries of $\psi_1$ and $\psi_2$ occupy disjoint blocks.
Hence $\{\psi_1, \psi_2\}$ is an orthonormal set in the left null space of
$DG$.

Under the non-degeneracy assumption that these are the only left-null
directions (verified numerically for the networks studied here), the cokernel
is exactly $\operatorname{span}\{\psi_1, \psi_2\}$.
\end{proof}

\begin{definition}[The Projection $P$]
\label{def:projection}
Since $\psi_1, \psi_2 \in \R^{2n+3}$ are orthonormal, define the orthogonal
projection onto their orthogonal complement by
\[
  P\bw = \bw - (\bw^T \psi_1)\psi_1 - (\bw^T \psi_2)\psi_2,
  \qquad \bw \in \R^{2n+3}.
\]
The image of $P$ is $(2n+1)$-dimensional and lies inside $\R^{2n+3}$.
\end{definition}

\begin{lemma}[Range Condition]
\label{lem:range}
For all $(\btheta, \bv, K, a)$, the vector $G(\btheta, \bv, K, a)$ lies in
$\ran(P)$, i.e., $\psi_i^T G = 0$ for $i = 1, 2$.
\end{lemma}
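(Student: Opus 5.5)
We must show $\psi_1^T G \equiv 0$ and $\psi_2^T G \equiv 0$ identically in $(\btheta,\bv,K,a)$. Because $\psi_1$ and $\psi_2$ are supported only on the first and second $n$-blocks of $\R^{2n+3}$, the last three components of $G$ (namely $\bv^T\bv-1$, $\bv^T\bone$ and $\theta_1$) never enter these inner products. So the claim reduces to two scalar identities:
\[
\psi_1^T G = \tfrac{1}{\sqrt{n}}\,\bone^T F(\btheta,K,a), \qquad \psi_2^T G = \tfrac{1}{\sqrt{n}}\,\bone^T D_{\btheta}F(\btheta,K,a)\,\bv .
\]
Once both vanish, $PG = G - (G^T\psi_1)\psi_1 - (G^T\psi_2)\psi_2 = G$, which is exactly $G\in\ran(P)$.

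For the first identity I would reuse the computation already made in the proof of Lemma~\ref{lem:cokernel}. The term $\bone^T\bOmega=0$ because the frequencies are mean zero. The double sum $\sum_{i,j}A_{ij}\sin(\theta_j-\theta_i)$ vanishes because the $(i,j)$ and $(j,i)$ terms cancel, using $A_{ij}=A_{ji}$ and the oddness of $\sin$. Finally $\bone^T g_{pq}(\btheta)=\sin(\theta_q-\theta_p)(1-1)=0$. This holds for every $(\btheta,K,a)$, not only at the critical point.

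For the second identity I would check that $D_{\btheta}F(\btheta,K,a)$ is symmetric with zero row sums for every $a$, not just $a=0$. The point is that the perturbation $a g_{pq}$ only adds a rank-one edge Laplacian to the Jacobian: its contribution is $Ka\cos(\theta_q-\theta_p)$ times $\bigl(-(\be_p-\be_q)(\be_p-\be_q)^T\bigr)$, which is symmetric with zero row and column sums. So $D_{\btheta}F$ is a weighted graph Laplacian with weights $K(A_{ij}+a\delta)\cos(\theta_j-\theta_i)$, where $\delta$ marks the new edge, and hence $\bone^T D_{\btheta}F=0^T$ for every $\bv$.

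The main (mild) obstacle is exactly this extension to $a\neq 0$. Lemma~\ref{lem:kernel} established the Laplacian structure only at $a=0$, so the contribution of the $g_{pq}$ term to the Jacobian has to be differentiated explicitly. The rest is bookkeeping.
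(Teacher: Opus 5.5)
Your proof is correct and follows the paper's argument. The paper refers back to the proof of Lemma~\ref{lem:cokernel}, where $\bone^T F\equiv 0$ comes from the mean-zero frequencies, the pairwise cancellation of the sine terms and $\bone^T g_{pq}=0$, and $\bone^T D_{\btheta}F\equiv 0$ from symmetry plus zero row sums; your explicit rank-one term $-Ka\cos(\theta_q-\theta_p)(\be_p-\be_q)(\be_p-\be_q)^T$ fills in the $a\neq 0$ case the paper covers only implicitly (differentiating $\bone^T F\equiv 0$ in $\btheta$ would give it directly).
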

\begin{proof}
    Both identities were established in the proof of Lemma~\ref{lem:cokernel}: $\psi_1^T G = \frac{1}{\sqrt{n}}\bone^T F \equiv 0$ and $\psi_2^T G = \frac{1}{\sqrt{n}}\bone^T D_{\btheta}F\,\bv \equiv 0$, holding for all arguments, not only at solutions.
\end{proof}

\begin{corollary}
\label{cor:PGequivG}
$PG(\btheta, \bv, K, a) = \bzero$ if and only if $G(\btheta, \bv, K, a) = \bzero$.
\end{corollary}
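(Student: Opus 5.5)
The corollary follows directly from Lemma~\ref{lem:range} and the definition of $P$ in Definition~\ref{def:projection}. We prove the two implications separately.

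For the reverse direction, suppose $G(\btheta,\bv,K,a)=\bzero$. Since $P$ is linear, $PG=P\bzero=\bzero$.

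For the forward direction, suppose $PG(\btheta,\bv,K,a)=\bzero$. Write $\bw=G(\btheta,\bv,K,a)$. By Definition~\ref{def:projection},
\[
\bzero=P\bw=\bw-(\bw^T\psi_1)\psi_1-(\bw^T\psi_2)\psi_2 .
\]
Lemma~\ref{lem:range} gives $\psi_1^T G=0$ and $\psi_2^T G=0$ for all arguments $(\btheta,\bv,K,a)$, not only at solutions. Hence $\bw^T\psi_1=\bw^T\psi_2=0$, and the display reduces to $\bzero=\bw$. Thus $G(\btheta,\bv,K,a)=\bzero$.

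Equivalently, $P$ restricted to $\ran(P)=\operatorname{span}\{\psi_1,\psi_2\}^\perp$ is the identity. Lemma~\ref{lem:range} places $G$ in this subspace, so $P$ acts as the identity on $G$ and cannot annihilate a nonzero value.

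The only point requiring care is that Lemma~\ref{lem:range} must hold \emph{identically} in $(\btheta,\bv,K,a)$. That is exactly how it was established: $\bone^TF\equiv0$ because $\sum_i\omega_i=0$ and the sine terms cancel antisymmetrically, and $\bone^TD_{\btheta}F\equiv\bzero^T$ because the Jacobian is symmetric with zero row sums. The corollary does not need the orthonormality of $\psi_1,\psi_2$ beyond the fact that it makes $P$ well defined, nor does it need the non-degeneracy assumption on the cokernel.
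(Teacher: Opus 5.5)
Your proposal is correct and matches the paper's proof: the easy direction uses linearity of $P$, and the other rewrites $PG=\bzero$ as $G=(\psi_1^TG)\psi_1+(\psi_2^TG)\psi_2$ and kills both coefficients with Lemma~\ref{lem:range}, which holds identically in $(\btheta,\bv,K,a)$. Your side remarks are also accurate: only the fact that $P\bw=\bw$ on $\operatorname{span}\{\psi_1,\psi_2\}^\perp$ is used, and the cokernel non-degeneracy assumption plays no role.
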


\begin{proof}
    The forward direction is trivial: if $G = \bzero$ then $PG = P\bzero = \bzero$. For the converse, suppose $PG = \bzero$. By Definition~\ref{def:projection}, 
\begin{align}
    \mathbf{0} &= G - (G^T\psi_1)\psi_1 - (G^T\psi_2)\psi_2 \\
    G &= (G^T\psi_1)\psi_1 + (G^T\psi_2)\psi_2 \\
    G &= (\psi_1^TG)\psi_1 + (\psi_2^TG)\psi_2.
\end{align}
By Lemma~\ref{lem:range}, $\psi_i^T G = 0$ for $i = 1, 2$. Hence $G = 0 + 0 = \bzero$.
\end{proof}
\begin{remark}
    This is the key step that is sometimes called the "range condition" in Lyapunov--Schmidt reduction. The projection $P$ kills exactly the cokernel directions, and Corollary~\ref{cor:PGequivG} ensures that no information is lost by working with $H=PG$ instead of $G$ itself.
\end{remark}
After applying $P$, the first component (index $1$, the $\psi_1$-direction, corresponding to the mean of the $F$-block) and the $(n+1)$-st component (index $n+1$, the $\psi_2$-direction, corresponding to the mean of the $D_{\btheta}F\,\bv$-block) of $PG$ are identically zero. Dropping these redundant outputs yields a map
\[
  H(\btheta, \bv, K, a)
  := \text{(drop indices 1 and $n+1$ from } PG(\btheta, \bv, K, a)\text{)},
\]
so that $H : \R^{2n+2} \to \R^{2n+1}$. By Corollary~\ref{cor:PGequivG}, $H = \bzero$ if and only if $G = \bzero$. And now we are in a position to apply the IFT. 

\begin{lemma}[Conditions for IFT]
\label{lem:IFT}
Let $H(\btheta, \bv, K, a)$ be as above, so $H : \R^{2n+2} \to \R^{2n+1}$.
Treat $a \in \R$ as the free parameter and $(\btheta, \bv, K) \in \R^{2n+1}$
as the solve-for variables. Suppose:
\begin{enumerate}
  \item There exists a critical point $(\btheta_c, \bv_c, K_c)$ such that
    $H(\btheta_c, \bv_c, K_c, 0) = \bzero$.
  \item $H$ is continuously differentiable in a neighborhood of
    $(\btheta_c, \bv_c, K_c, 0)$.
  \item The partial Jacobian $D_{(\btheta,\bv,K)}H$ is a $(2n+1)\times(2n+1)$
    square matrix (domain and codomain dimensions match with $a$ held fixed).
  \item $D_{(\btheta,\bv,K)}H(\btheta_c, \bv_c, K_c, 0)$ is invertible. (This is a genericity condition, verified numerically for the networks studied here)
\end{enumerate}
Then there exist smooth functions $\btheta(a)$, $\bv(a)$, $K_c(a)$ defined for
$a$ near $0$ such that $\btheta(0) = \btheta_c$, $\bv(0) = \bv_c$,
$K_c(0) = K_c$, and $H(\btheta(a), \bv(a), K_c(a), a) = \bzero$ for all $a$
near $0$.
\end{lemma}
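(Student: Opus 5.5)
The statement is the classical Implicit Function Theorem, applied to the reduced map $H$. The plan is to check that each hypothesis of that theorem holds for $H$ at the base point $(\btheta_c,\bv_c,K_c,0)$, and then read off the conclusion. No new analytic estimate is needed, so the proof is mostly bookkeeping.

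\textbf{Hypothesis 1.} Verify $H(\btheta_c,\bv_c,K_c,0)=\bzero$. The steady state condition gives $F(\btheta_c,K_c,0)=\bzero$. The hypotheses of Theorem~\ref{thm:saddlenode} supply the null vector condition, the normalization $\bv_c^T\bv_c=1$, and the orthogonality $\bv_c^T\bone=0$. The gauge $\theta_1=0$ can always be arranged by a global phase shift, since $F$ is invariant under $\btheta\mapsto\btheta+c\bone$. Hence $G=\bzero$ at this point, so $PG=\bzero$ and therefore $H=\bzero$.

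\textbf{Hypothesis 2.} Every component of $G$ is built from $\sin$ and $\cos$ of phase differences, multiplied by polynomials in $K$, $a$ and $\bv$. So $G$ is $C^\infty$, in fact real analytic. Composing with the constant linear map $P$ and with a coordinate projection (dropping two outputs) preserves this smoothness. Thus $H$ is $C^\infty$, which is more than the required $C^1$.

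\textbf{Hypothesis 3.} Count dimensions. $G$ has $2n+3$ outputs and $H$ drops two of them, leaving $2n+1$. The unknowns $(\btheta,\bv,K)$ number $n+n+1=2n+1$. So $D_{(\btheta,\bv,K)}H$ is square.

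\textbf{Hypothesis 4.} Invertibility of $D_{(\btheta,\bv,K)}H$ is the only substantive point, and here the plan is to rely on the genericity assumption stated in the lemma. Two remarks support it.
\begin{itemize}
\item By Lemma~\ref{lem:cokernel} and the fact that $\psi_1,\psi_2$ are the only left-null directions, $\ran(DG)=\ran(P)$. Removing the two coordinates in directions $\psi_1,\psi_2$ is injective on $\ran(P)$, because a vector with zero mean in each block is determined by its remaining entries. So $D_{(\btheta,\bv,K)}H$ is surjective, and being square, it is invertible. This reduces hypothesis~4 to the stated cokernel nondegeneracy.
\item The step I expect to be the real obstacle is deriving that nondegeneracy from standard saddle-node conditions, namely the transversality $\bv_c^T\partial_KF=-\bv_c^T\Omega/K_c\neq 0$ and the quadratic coefficient $\bv_c^TD^2_{\btheta}F[\bv_c,\bv_c]\neq0$, rather than simply assuming it. I would first attempt a block elimination on $DG$ to carry out this derivation. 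If that fails, I would fall back on the numerical verification, as the lemma itself indicates.
\end{itemize}

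\textbf{Conclusion.} With hypotheses 1–4 established, the classical Implicit Function Theorem gives a neighborhood of $a=0$ and unique smooth functions $(\btheta(a),\bv(a),K_c(a))$ passing through $(\btheta_c,\bv_c,K_c)$ at $a=0$ and satisfying $H=\bzero$. By Corollary~\ref{cor:PGequivG}, these same functions also satisfy $G=\bzero$.
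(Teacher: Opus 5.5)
Your proposal is correct and matches the paper's argument: the paper simply observes that (i)--(iv) are exactly the hypotheses of the classical Implicit Function Theorem for $H:\R^{2n+1}\times\R\to\R^{2n+1}$ and concludes. Your checks of hypotheses 1--4 are therefore extra bookkeeping, since the lemma assumes them outright. Your reduction of (iv) to the cokernel nondegeneracy is sound, and the block elimination you anticipate does work: $\bv_c^T\Omega\neq 0$ forces the $K$-component of any kernel vector of $D_{(\btheta,\bv,K)}G$ to vanish, and then $\bv_c^T D^2_{\btheta}F[\bv_c,\bv_c]\neq 0$, the identity $D^2_{\btheta}F[\bone,\cdot\,]=0$, and the constraints force the whole kernel to be trivial.
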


Conditions (i)--(iv) are precisely the hypotheses of the classical Implicit
Function Theorem applied to $H : \R^{2n+1} \times \R \to \R^{2n+1}$. The conclusion follows immediately from the standard IFT.

We now have all the ingredients to finalize the proof of Theorem~\ref{thm:saddlenode}. By Lemma ~\ref{lem:IFT}, there exist smooth functions $\btheta(a)$, $\bv(a)$,
$K_c(a)$ such that $H(\btheta(a),\bv(a),K_c(a),a) = \bzero$ for $a$ near $0$. 
By Corollary~\ref{cor:PGequivG}, $H = \bzero$ if and only if $G = \bzero$, so in particular the first block of $G$ gives $F(\btheta(a), K_c(a), a) = \bzero$ for all $a$ near $0$. 

Since $F(\btheta(a), K_c(a), a) = \bzero$ and all functions are smooth, we differentiate with respect to $a$ and evaluate at $a = 0$. By the chain rule, \begin{equation}
\label{eq:diffF}
  D_{\btheta}F\,\btheta'(0) + \partial_K F \cdot K'_c(0) + \partial_a F = \bzero.
\end{equation}
We compute each term separately.

For the term $\partial_K F$: differentiating $F(\btheta, K, a)$ with respect to $K$ gives: 
\[
  \partial_K F = s(\btheta) + a\,g_{pq}(\btheta),
\]
which at $a=0$ equals $s(\btheta(0))$.

For the term $\partial_a F$: differentiating with respect to $a$ gives
\[
  \partial_a F = K\,g_{pq}(\btheta),
\]
which at $a=0$ equals $K_c(0)\,g_{pq}(\btheta(0))$.

Substituting into \eqref{eq:diffF} at $a=0$:
\begin{equation}
\label{eq:diffF2}
  D_{\btheta}F(\btheta(0),K_c(0),0)\,\btheta'(0)
  + s(\btheta(0))\,K'_c(0)
  + K_c(0)\,g_{pq}(\btheta(0))
  = \bzero.
\end{equation}

Now multiply \eqref{eq:diffF2} on the left by $\bv(0)^T$:
\begin{align*}
  \bv(0)^T D_{\btheta}F(\btheta(0),K_c(0),0)\,\btheta'(0)
  + \bv(0)^T s(\btheta(0))\,K'_c(0)
  \\ + K_c(0)\,\bv(0)^T g_{pq}(\btheta(0))
  = 0.  
\end{align*}

Since $\bv(0)$ is in the kernel of $D_{\btheta}F(\btheta(0), K_c(0), 0)$ (this is the second block of $G=0$,  at $a=0$), the matrix $D_{\btheta}F$ is symmetric, so its kernel equals its left kernel:
\[
  \bv(0)^T D_{\btheta}F(\btheta(0),K_c(0),0) = 0. 
\]
Therefore, the first term vanishes, leaving
\[
  \bv(0)^T s(\btheta(0))\,K'_c(0)
  + K_c(0)\,\bv(0)^T g_{pq}(\btheta(0))
  = 0.
\]
By the genericity condition, $\bv(0)^T s(\btheta(0)) \neq 0$, we may solve:
\[K'_c(0)
  = -K_c(0)\,\frac{\bv(0)^T g_{pq}(\btheta(0))}{\bv(0)^T s(\btheta(0))}.\]
Recall that
\[g_{pq}(\btheta)=\sin(\theta_q-\theta_p)(e_p-e_q)\]
and at the steady state, when $a=0$,
\[0=\Omega+Ks(\theta)\implies s(\theta)=-\frac{\Omega}{K}.\]
Thus, 
\[\bv(0)^Tg_{pq}(\btheta(0))=\sin(\theta_q(0)-\theta_p(0))(v_p(0)-v_q(0))\]
and
\[\bv(0)^Ts(\theta)=-\bv(0)^T\frac{\Omega}{K_c(0)}.\]
Now expanding $\bv^T g_{pq}(\btheta)$ and $\bv^T s(\btheta)$ at $a=0$,
then substituting yields 
\begin{align*}
    K'_c(0)
& = \sin\!\bigl(\theta_q(0)-\theta_p(0)\bigr)\,(K_c(0))^2\,
    \frac{v_p(0) - v_q(0)}{\bv(0)^T\bOmega}. 
\end{align*}
This completes the proof.

\section{Proof of Theorem~\ref{thm:stablepert}} \label{appendixB}
\begin{proof}
Suppressing the dependence on $K$, we seek zeros of the following function $F(\mathbf{\theta},a)=0$ subject to the phase condition $\theta_1=0$.   Let $P:\mathbb{R}^{n}\to\mathbb{R}^{n-1}$ be the orthogonal projection defined via $(Pu)_{k-1}=(\mathbf{u}\cdot \mathbf{v}_k)$ for each eigenvector $\mathbf{v}_k$ with non-zero eigenvalue $\lambda_k$. We then define
\[ G(\mathbf{\theta},a)=\left(\begin{array}{c} PF(\mathbf{\theta},a) \\ \theta_1\end{array}\right).\]
Note that $G:\mathbb{R}^{n}\times\mathbb{R}^1\to\mathbb{R}^{n}$ and $D_\theta G$ is invertible since its kernel and cokernel are trivial.    Therefore, the Implicit Function Theorem gives the local existence of a function $\mathbf{\theta}^*(a)$ satisfying $\mathbf{\theta}^*(0)=\mathbf{\theta}^*$. 

Recall the definition of the order parameter $r(\mathbf{\theta})$; see  (\ref{eq:orderpar}).  Noting that we are considering $\theta$ as a function of the edge weight parameter $a$, this can be expressed as 
\[ r(\mathbf{\theta^*}(a))^2 =\frac{1}{n^2}\left(\sum_{j=1}^n \cos(\theta^*_j(a))\right)^2
+\frac{1}{n^2}\left(\sum_{j=1}^n \sin(\theta^*_j(a))\right)^2.\]
Define $C = \sum_{j=1}^n \cos\theta_j^*$ and $S = \sum_{j=1}^n \sin\theta_j^*$. Differentiating $r^2$ with respect to $a$ and evaluating at $a=0$ we find
\begin{align*}
    \frac{d}{da} \left.r(\theta^*(a))^2\right|_{a=0} & = \frac{2}{n^2}\left(\sum_j \cos\theta_j^*\right)
\left(-\sum_j \sin\theta_j^*\,\theta_j'(0)\right)\\ 
& + \frac{2}{n^2}\left(\sum_j \sin\theta_j^*\right)
\left(\sum_j \cos\theta_j^*\,\theta_j'(0)\right),
\end{align*}
which simplifies to 
\[ \frac{d}{da} \left.\frac{r(\theta^*(a))^2}{2}\right|_{a=0} = \frac{1}{n^2}\,\mathbf{w}\cdot\frac{d}{da}\mathbf{\theta}^*(a)|_{a=0},\]
where $\mathbf{w}$ has components $w_j = S\cos\theta_j^* - C\sin\theta_j^*$.

We have that 
\[ \Omega+Ks(\mathbf{\theta}^*(a))+Ka g_{pq}(\mathbf{\theta}^*(a))=0. \]
Differentiating with respect to $a$ and evaluating at $a=0$ yields

\[D_\theta F(\theta^*,0)\theta'(0)+D_aF(\theta^*,0)=0.\]
Since 
\[F(\theta,a)=\Omega+Ks(\theta)+Ka\,g_{pq}(\theta),\]
we have
\[D_aF(\theta^*,0)=K g_{pq}(\theta^*),\]
and therefore
\[D_\theta F(\theta^*,0)\theta'(0)=-Kg_{pq}(\theta^*).\]

Recalling the representation 
\[ g_{pq}=\sin(\theta_q-\theta_p)(\mathbf{e}_p-\mathbf{e}_q)\] 
with $p<q$. Since $\mathbf{1}\cdot g_{pq}=0$ and we
may apply the reduced inverse \[DF(\theta^*)^\dagger=\sum_{k=2}^n\frac1{\lambda_k}v_kv_k^T\] acting on the orthogonal complement of $v_1$. Then
\[\theta'(0)=-K\sin(\theta_q^*-\theta_p^*)DF(\theta^*)^\dagger(\mathbf{e}_p-\mathbf{e}_q),\]
which expands as 
\[\theta'(0)=-K\sin(\theta_q^*-\theta_p^*)\sum_{k=2}^n
\left(\frac{v_{kp}-v_{kq}}{\lambda_k}\right)v_k.\]
Substituting into the expression for $r(\theta(0))r'(\theta(0))$ and using $\gamma_k = \mathbf{w}\cdot v_k$ gives equation~\eqref{eq:ravaries}.
\end{proof}

\section{Theorem~\ref{thm:stablepert} as applied to Erd\"os-R\'enyi random graphs} \label{apc}
In Arola-Fernandez, Skardal and Arenas\cite{arola2021geometric}, a criterion for Braess paradox is derived in the context of random networks and applied to Erd\"os R\'enyi random graphs in particular.  This criterion applies for large $K$ where the following expansion is valid: 
\[ \theta_i^*=\frac{\omega_i}{Kd_i}+o\left(\frac{1}{K}\right),\]
where $d_i$ is the degree of the $i$th node. We then write 
\begin{equation}
\begin{split}  \frac{d}{da} &\left.\frac{r(\theta^*(a))^2}{2}\right|_{a=0} =-\frac{K_c(0)}{n^2} \mathbf{w} \cdot \left[DF(\mathbf{\theta}^*(0))^{-1} g_{pq}(\mathbf{\theta}^*(0))\right]  \\
&= -\sin(\theta^*_q(0)-\theta^*_p(0))\frac{K_c(0)}{n^2} \mathbf{w} \cdot \left[DF(\mathbf{\theta}^*(0))^{-1} \left(\mathbf{e}_p-\mathbf{e}_q\right)\right],
\end{split}
\end{equation}
where $w_j = S\cos\theta_j^* - C\sin\theta_j^*$ with $C = \sum_{j=1}^n \cos\theta_j^*$ and $S = \sum_{j=1}^n \sin\theta_j^*$, and the operator $DF^{-1}$ is understood as acting on the range of $DF(\mathbf{\theta}(0))$.
Following the approach in Arola-Fernandez et al.\cite{arola2021geometric}, the idea is to approximate 
\[DF(\mathbf{\theta}^*(0))\approx \mathrm{diag}\left(K_c(0)d_1,K_c(0)d_2,\dots, K_c(0)d_n\right), \]
and obtain, utilizing the fact that $\theta_i\approx \frac{\omega_i}{Kd_i}\ll 1$ to obtain
\[ \frac{d}{da} r(\theta^*(a))|_{a=0}\approx-\frac{1}{n^2}\left( \frac{\omega_q}{d_q}-\frac{\omega_p}{d_p}\right)\left(\frac{\omega_p}{d_p^2}-\frac{\omega_q}{d_q^2}\right).\]
We therefore recover from Theorem~\ref{thm:stablepert} the same prediction as in Arola-Fernandez et al.\cite{arola2021geometric} as the leading order term in the expansion for the derivative of $r(\theta^*)$ with respect to $a$ for $K$ large.

\end{appendix}

\bibliography{braessbib}

\end{document}